\def\a{\alpha}
\def\b{\beta}
\def\d{\delta}
\def\D{\Delta}
\def\t{\theta}
\def\ve{\varepsilon}
\def\Id{\mathop{\rm Id}\nolimits}
\def\lra{\longrightarrow}
\def\ot{\otimes}
\def\lra{\longrightarrow}
\def\D{\Delta}
\def\Id{\mathop{\rm Id}\nolimits}
\newcommand{\ns}[1]{~\hspace{-4pt}_{_{{<#1>}}}}
\newcommand{\C}[1]{\mathcal{#1}}
\newcommand{\B}[1]{\mathbb{#1}}
\newcommand{\wbar}[1]{\overline{#1}}
\newcommand{\CB}{{\rm CB}}
\newcommand{\Hom}{{\rm Hom}}
\newcommand{\cotor}{{\rm Cotor}}
\newcommand{\ie}{{\it i.e.\/}\ }
\newcommand{\Div}{\C{D}\text{iv}}
\newcommand{\Bin}{\C{B}\text{in}}
\newcommand{\Dir}{\C{D}\text{ir}}
\renewcommand{\leq}{\leqslant}
\renewcommand{\geq}{\geqslant}
\numberwithin{equation}{section}
\newtheorem{theorem}{Theorem}[section]
\newtheorem{proposition}[theorem]{Proposition}
\newtheorem{lemma}[theorem]{Lemma}
\newtheorem{corollary}[theorem]{Corollary}
\theoremstyle{definition}
\newtheorem{remark}[theorem]{Remark}
\newtheorem{example}[theorem]{Example}
\newtheorem{definition}[theorem]{Definition}
\title{Hochschild Cohomology of Reduced Incidence Algebras}
\author{M. KANUN\.I}
\author{A. Kaygun}
\author{S. Sütlü}
\begin{document}

\maketitle

\begin{abstract}
We compute the Hochschild cohomology of the reduced incidence algebras such as the algebra of
formal power series, the algebra of exponential power series, the algebra of Eulerian power series, and the
algebra of formal Dirichlet series. We achieve the result by carrying out the computation on the coalgebra $\cotor$-groups of their pre-dual coalgebras. Using the same coalgebraic machinery, we further identify the Hochschild cohomology groups of an incidence algebra associated to a quiver with the ${\rm Ext}$-groups of the incidence algebra associated to a suspension of the quiver.
\end{abstract}

\section*{Introduction}

Employing coalgebraic methods in performing otherwise difficult
calculations is hardly new.  G.R. Rota was a proponent, and an
effective user of this approach in combinatorics \cite{Rota95}.  We would like, now,
to use a similar strategy in cohomology.  

Unfortunately, the fact that vector space duality is not an
equivalence on any category of (infinite dimensional) vector spaces is
a hindrance in viewing algebraic and coalgebraic methods on the same
footing.  For example, the vector space dual of every coalgebra is an
algebra, but not every algebra has a pre-dual coalgebra.  When this
happens, one can use coalgebraic methods calculating invariants of
such algebras with pre-duals.

In this paper we demonstrate, by concrete calculations, that
coalgebraic methods bring in a new set of tools in calculating
cohomological invariants of a large class of associative algebras.

In~\cite{Cibi89}, Cibils gives a functorial construction on quivers which can be viewed 
as a \emph{suspension functor}. He shows that the simplicial cohomology groups of 
the simplicial complex associated to a quiver, and the ${\rm Ext}$-groups of the incidence algebra 
associated to the suspension are isomorphic with a degree 2
shift. Combined with the result of Gerstenhaber and Schack in \cite{GersScha83}, identifying the 
simplicial cohomology of the simplicial complex associated to a quiver with the Hochschild 
cohomology of the incidence algebra associated to it, one obtains an isomorphism between the 
Hochschild cohomology groups of a quiver incidence algebra, and ${\rm Ext}$-groups of the incidence algebra 
associated to the suspension, with a degree 2 shift.
We obtain the combination of these two results directly without referring to 
the simplicial cohomology, using our coalgebraic machinery, in Subsection \ref{Suspension}.

The following result of Subsection~\ref{Calculations} is new. We calculate the continuous Hochschild cohomology of four
associative algebras: the algebra of
formal power series, the algebra of exponential power series, the algebra of Eulerian power series, and the
algebra of formal Dirichlet series where the first three are
isomorphic as algebras.  These algebras are all examples of incidence
algebras.  We obtain the isomorphisms by furnishing their pre-dual
coalgebras (called \emph{incidence coalgebras}) and then
proving these incidence coalgebras are isomorphic in
Section~\ref{Isomorphisms}.  Then we verify that the cohomology of
these coalgebras carry a ring structure, and we explicitly calculate
the Hochschild cohomology rings of their duals.

The plan of the paper is as follows. We begin with a review of incidence
algebras.  Then in the next section we introduce incidence coalgebras.
In the third section we construct the isomorphisms we mentioned above.
In the following section we introduce the cohomological background
needed to perform our cohomological calculations, and in the final
section we make our calculations.

\subsection*{Notations and Conventions}
Throughout the text all (co)algebras are (co)unital and (co)associative, over a base field $k$ of characteristic 0. 
We make no assumptions that (co)algebras are finite dimensional, or (co)commutative. All unadorned tensor
products are assumed to be over $k$. The cotensor product is denoted by $\Box$. 

For a finite set $U$, we use $|U|$ to denote the cardinality of
$U$. A {\it graph} $G=(V,E)$ has a vertex set $V$, and an edge set $E$ 
\begin{align*}
E \subset \left(\begin{array}{c} V \\ 2 \end{array} \right).
\end{align*}
If $|V|$ is finite, then $G$ is called a {\it finite graph}. A {\it
  tree} is a graph in which, any two vertices has a unique path
between them. A {\it forest} is a disjoint union of trees.  A {\it quiver} $Q=(Q_0,Q_1)$ is a directed graph with
the vertex set $Q_0$ and the arrow set $Q_1$, with initial and
terminal points of each arrow being in $Q_0$. An {\it oriented path}
is a sequence of arrows concatenated by respecting the orientation of
each arrow. A {\it cycle} is a path whose initial and terminal vertex
is the same.
A vertex is called a {\it source} if it is not the terminal vertex of any arrow, and a vertex is called a {\it sink} if it is 
not the initial vertex of any arrow.  

\section{Incidence Algebras}

In this section we recall the notion of an (reduced) incidence algebra from \cite{Rota64,DoubRotaStan72,SpiegelODonnell-book}. We also discuss briefly the duality between an incidence Hopf algebra, and a reduced incidence algebra. This duality will be essential in computing the (co)homology of a reduced incidence algebra.

\subsection{Basic definitions}

Let us recall that a partially ordered set, poset in short, $(P ,\leq)$ is called {\it locally finite} if, given any $x,y\in P$, the interval 
\begin{equation*}
[x,y] = \{z\in P |\ x\leq z\leq y \} 
\end{equation*}
is finite.

\begin{example}
  The set of natural numbers $\B{N}$, or the set of integers $\B{Z}$ are both locally finite partially ordered sets with respect to
  the standard order relation $\leq$.
\end{example}

Let $(P,\leq)$ be a locally finite partially ordered set. Then
\begin{equation*}
{\bf I}(P) := \{f:P\times P \to k\mid f(x,y)=0,\text{ if } x \nleq y\}
\end{equation*}
becomes an algebra, called the {\it incidence algebra of $P$}, with the multiplication
\begin{equation*}
(fg)(x,y) = \sum_{x\leq z\leq y} f(x,z)g(z,y).
\end{equation*}
We note that the identity element of ${\bf I}(P)$ is given by
\begin{equation*}
\delta(x,y) = \begin{cases}
1, \text{ if } x = y, \\
0, \text{ if } x \neq y.
\end{cases}
\end{equation*}

Let us note also from \cite[Prop. 3.1]{DoubRotaStan72} that the incidence algebra ${\bf I}(P)$ is actually a topological algebra, equipped with the standard topology which is given as follows: a sequence $\{f_n\}_{n\in \B{N}}$ converges to $f \in {\bf I}(P)$ if and only if $\{f_n(x,y)\}_{n\in \B{N}}$ converges to $f(x,y)$ in $k$ for any $x,y \in P$.

In order to define a reduced incidence algebra, we recall the order compatibility on the segments of a partially ordered set from \cite[Def. 4.1]{DoubRotaStan72}, see also \cite[Sect. 3]{Schm94}.

\begin{definition}
Let $P$ be a locally finite partially ordered set, and $\sim$ an equivalence relation on the intervals of $P$. The relation $\sim$ is called {\it order compatible} if for any two $f,g \in {\bf I}(P)$, $f(x,y)=f(u,v)$ and $g(x,y)=g(u,v)$ for any pair of intervals $[x,y]\sim[u,v]$ implies that $(fg)(x,y) = (fg)(u,v)$.
\end{definition}

Given an order compatible equivalence relation $\sim$, let $\alpha := \wbar{[x,y]}$ denote the equivalence class of an interval $[x,y]$ with respect to $\sim$, called the {\it type of $[x,y]$}. Then, the set ${\bf R}(P,\sim)$ of all functions defined on the types is an algebra with the multiplication given by
\begin{equation*}
(fg)(\alpha) = \sum\left[\begin{array}{c}
\alpha \\
\beta,\,\gamma
\end{array}\right] f(\beta)g(\gamma),
\end{equation*}
where 
\begin{align*}
\left[\begin{array}{c}
\alpha \\
\beta,\,\gamma
\end{array}\right] &:= \begin{cases}
\text{ the number of distinct elements $z$ in an interval $[x,y]$ with $\a=\wbar{[x,y]}$ such that } \\
 \text{ $\wbar{[x,z]} =\b$, and $\wbar{[z,y]} =\gamma$.}
\end{cases}
\end{align*}
The algebra $ {\bf R}(P,\sim)$ is called the {\it reduced incidence algebra}. It is noted in \cite[Prop. 4.3]{DoubRotaStan72} that ${\bf R}(P,\sim)$ is a subalgebra of ${\bf I}(P)$, where the inclusion $\hat\,:{\bf R}(P,\sim) \lra {\bf I}(P,k)$, $f \mapsto \hat{f}$ is given by $\hat{f}(x,y):=f(\alpha)$ if the segment $[x,y]$ is of type $\alpha$.

We proceed to the examples of incidence algebras.

\subsection{Formal power series}\label{ex1} \cite[Ex. 4.5]{DoubRotaStan72}.
Let $\B{Z}^{\geq 0} = \{0,1,2,\ldots \}$ be the set of non-negative integers with
the usual ordering. Let also $\sim$ be the isomorphism of
intervals. Then the type $\wbar{[i,j]}$ of an interval $[i,j]$ is
determined by the non-negative number $j-i \geq 0$. Furthermore,
\begin{equation*}
(f\cdot g)\left(\wbar{[i,j]}\right) = \sum_{i\leq k \leq j} f\left(\wbar{[i,k]}\right)g\left(\wbar{[k,j]}\right), 
\end{equation*}
that is, for $n:=j-i$, and $r:=k-i$,
\begin{equation*}
(f\cdot g)(n) = \sum_{r=0}^n f(r)g(n-r). 
\end{equation*}
As a result,
\begin{equation*}
{\bf R}(\B{Z}^{\geq 0},\sim) \lra k[[x]], \qquad f \mapsto \sum_{n=0}^\infty f(n)x^n
\end{equation*}
is an isomorphism of algebras.

\subsection{Exponential power series}\label{ex2} \cite[Ex. 4.6]{DoubRotaStan72}. Let $B(S)$ be the family of all finite subsets of a countable set $S$, ordered by inclusion. Let, as earlier, $\sim$ be the isomorphism of intervals. Then the type $\wbar{[A,B]}$ is determined by the number $|B\backslash A|$ of the elements of the set $B\backslash A$, see also \cite[Def. 3.2.17]{SpiegelODonnell-book}. Hence, an element $f \in {\bf R}(B(S),\sim)$ gives rise to a function on the set of non-negative integers, $f(n):=f(A,B)$ for any $[A,B] \subseteq B(S)$ with $|B\backslash A| = n$, and
\begin{equation*}
f \mapsto \sum_{n=0}^\infty \frac{f(n)}{n\,!}x^n
\end{equation*}
identifies ${\bf R}(P,\sim)$ with the algebra of exponential power series. Under this identification, the multiplication corresponds to the binomial product
\begin{equation*}
(f\cdot g)(n) = \sum_{r=0}^n \binom{n}{k} f(r)g(n-r).
\end{equation*} 

\subsection{Eulerian power series}\label{ex4} \cite[Ex. 4.9]{DoubRotaStan72}. Let $V$ be a vector space of countable dimension on ${\rm GF}(q)$, and $L(V)$ the lattice of finite dimensional subspaces of $V$. Let $\sim$ be the equivalence relation defined by $[X,Y] \sim [U,W]$ if $Y/X \cong W/U$, that is, $\dim(Y)-\dim(X) = \dim(W)-\dim(U)$. Therefore, the type of an interval corresponds to an integer. Furthermore, since the number of $j$-dimensional subspaces of an $n$-dimensional vector space over the finite field ${\rm GF}(q)$ is the q-binomial coefficient
\begin{align*}
\left[\begin{array}{c}
n \\
j 
\end{array}\right]_q = \frac{[n]_q!}{[n-k]_q! [k]_q!},
\end{align*}
see \cite[Thm. 7.1]{KlimSchm-book,KacCheung-book}, see also \cite[Ex. 2.2.21]{SpiegelODonnell-book}, where 
\begin{equation*}
  [n]_q! := [1]_q [2]_q\cdots [n]_q = \frac{(q-1)(q^2-1)\cdots(q^n-1)}{(q-1)^n},
\end{equation*}
and the quantum integers $[n]_q$ are defined as
\begin{equation*}
  [n]_q = \frac{q^n-1}{q-1} = 1 + q + \cdots + q^{n-1},
\end{equation*}
the multiplication on ${\bf R}(L(V),\sim)$ is given by
\begin{equation}\label{Eulerian}
(f\cdot g)(n) 
= \sum_{r=0}^n {n\brack r}_q f(r)g(n-r) = \sum_{r=0}^n \frac{(q^n-1)(q^n-q)\ldots (q^n-q^{r-1})}{(q^r-1)(q^r-q)\ldots (q^r-q^{r-1})}f(r)g(n-r).
\end{equation}
As a result, the correspondence 
\begin{equation*}
f \mapsto \sum_{n\geq 0} \frac{f(n)}{[n]_q!} (q-1)^n x^n
\end{equation*}
identifies ${\bf R}(L(V),\sim)$ with the algebra of Eulerian power series. 

\subsection{Formal Dirichlet series}\label{ex3} \cite[Ex. 4.7]{DoubRotaStan72}. Let $\B{Q}$ be the additive group of rational numbers modulo $1$, and $L(\B{Q})$ the lattice of subgroups excluding $\B{Q}$. Since every proper subgroup of $\B{Q}$ is finite cyclic, setting $[X,Y] \sim [U,V]$ in $L(\B{Q})$ if $Y/X \cong V/U$, we may identify the type of an interval with a positive integer. Therefore, the multiplication in the reduced incidence algebra ${\bf R}(L(\B{Q}),\sim)$ is given by
\begin{equation*}
(f\cdot g)(n) = \sum_{ij=n} f(i)g(j),
\end{equation*}
and hence
\begin{equation*}
f \mapsto \sum_{n\geq 1} \frac{f(n)}{n^s}
\end{equation*}
identifies ${\bf R}(L(\B{Q}),\sim)$ with the algebra of formal Dirichlet series. 

An alternative construction that reflects the terminology is given in \cite[Sec.3 Ex.1]{Rota64}. Let the set of natural numbers $\mathbb{N}$ be partially ordered by the divisibility, and let $[x,y] \sim [u,v]$ in $P$ if 
$y/x = v/u$. As a result, the reduced incidence algebra ${\bf R}(P,\sim)$ is again the algebra of formal Dirichlet series with the above multiplication and identification. 

\subsection{Incidence algebras of quivers}\cite{Cibi89,GatiRedo01}.
Let $Q=(Q_0,Q_1)$ be an {\em ordered quiver} (a finite quiver without oriented cycles, such that for each arrow $x\overset{\a}{\lra}y\in Q_1$ there is no oriented path joining $x$ to $y$ other than $\a$). In this case, the set $Q_0$ of vertices of $Q$ becomes a finite poset via $x\geq y$ if there is an oriented path from $x\in Q_0$ to $y\in Q_0$. 

Let $kQ$ be the path algebra of a quiver $Q$, and $I$ the two-sided ideal of $kQ$ generated by the differences of paths with the same initial and terminal points. Then the algebra $\C{A}:=kQ/I$ is the incidence algebra of the poset associated to the quiver $Q$, \ie $\C{A}={\bf I}(Q_0)$. 

\subsection{Commutative graph incidence (Hopf) algebras}\label{graph-incidence} \cite[Sect. 12]{Schm94}. 
Let $G=(V,E)$ be a finite graph. Given a subset $U\subseteq V$, the induced subgraph $G\mid U$ is defined to be the graph with the vertex set $U$, and the edge set of all edges of $G$ with both end-vertices in $U$.

Let $\C{G}$ be the family of graphs which is closed under the formation of induced graphs, and sums, and $\widetilde{\C{G}}$ the set of isomorphism classes of graphs in $\C{G}$.

Given $G\in \C{G}$, let $B(G)$ be the lattice of subsets of $V$, ordered by inclusion, and $\C{P}(\C{G})$ the (hereditary, see \cite[Sect. 4]{Schm94}) family of all finite products of intervals from the posets of $B(G)$, for $G \in \C{G}$. For 
$G_i=(V_i,E_i),G_j'=(V_j',E_j') \in \C{G}$, $U_i \subseteq W_i \subseteq V_i$, $U'_j \subseteq W'_j \subseteq V'_j$, $1\leq i \leq n$, $1\leq j \leq m$, a (Hopf) relation on $\C{P}(\C{G})$ is given by 
\begin{eqnarray}\notag
& [U_1,W_1]\times \ldots \times [U_n,W_n] \sim [U'_1,W'_1]\times \ldots \times [U'_m,W'_m] \\\label{sim}
& \text{if and only if}\\\notag
&G_1 \mid (W_1-U_1) + \ldots + G_n \mid (W_n-U_n) \sim G'_1 \mid (W'_1-U'_1) + \ldots + G'_m \mid (W'_m-U'_m) \,\,\text{as graphs},
\end{eqnarray}
where the sum is defined to be the disjoint union of the graphs. The set $\widetilde{\C{P}(\C{G})}$ of isomorphism classes of (finite products of) intervals can then be identified with $\widetilde{\C{G}}$ via
\begin{equation*}
[U,W] \mapsto [G\mid (W-U)],
\end{equation*}
where the bracket on the right stands for the isomorphism class of the graph. The (reduced) incidence algebra ${\bf I}(\C{P}(\C{G}),\sim)$ is (identified with, see \cite[Sect. 7]{Schm94}) the set of all maps $\widetilde{\C{P}(\C{G})} \to k$ with the (convolution) multiplication
\begin{equation*}
fg([G]) := \sum_{U\subseteq V}f([G\mid U])g([G\mid (V-U)]).
\end{equation*}

\section{Incidence coalgebras}\label{sect-IncCoalg}

We now recall incidence coalgebras from \cite{Schm94,Schm87}. Let $\C{P}$ be a family of locally finite partially ordered sets, which is interval closed. That is, $\C{P}$ is non-empty, and if $x \leq y$ in $\C{P}$ then the interval $[x,y]$ belongs to $\C{P}$. Let also $\sim$ be an order compatible equivalence relation on the set $\C{I}nt(\C{P})$ of intervals of $\C{P}$. Then the free $k$-module ${\bf C}(\C{P},\sim)$ generated by the set $\C{I}nt(\C{P})/\sim$ of types is a coalgebra, see \cite[Thm. 3.1]{Schm94}, with the comultiplication
\begin{equation*}
\D:{\bf C}(\C{P},\sim) \lra {\bf C}(\C{P},\sim)\ot {\bf C}(\C{P},\sim), \qquad \D(\wbar{[x,y]}) = \sum_{z \in [x,y]} \wbar{[x,z]} \ot \wbar{[z,y]},
\end{equation*}
and the counit
\begin{equation*}
\ve:{\bf C}(\C{P},\sim) \lra k, \qquad \ve(\wbar{[x,y]}) = \begin{cases}
1, \text{ if } x = y,\\
0, \text{ otherwise}.
\end{cases}
\end{equation*}
Following the terminology of \cite{Schm87}, the coalgebra ${\bf C}(\C{P},\sim)$ is called the {\it reduced incidence coalgebra} of $\C{P}$ modulo the relation $\sim$. If $\sim$ is the interval isomorphism, then ${\bf C}(\C{P},\sim)$ is called the {\it standard reduced incidence coalgebra of $\C{P}$}. If, on the other extreme, $\sim$ is the trivial equivalence relation, then this incidence coalgebra is denoted simply by ${\bf C}(\C{P})$, and is called the {\it full (or unreduced) incidence coalgebra} associated to $\C{P}$.



\subsection{The divided powers coalgebra $\Div$}\label{DividedPowers} 
Let $\B{Z}^{\geq 0} = \{0,1,2,\ldots \}$ be the set of non-negative integers with the usual ordering, and $\sim$ the isomorphism of intervals. As we have observed earlier in Example \ref{ex1}, the type $\wbar{[i,j]}$ of a segment $[i,j]$ is determined by the nonnegative number $j-i \geq 0$. As a result, we obtain the standard reduced incidence coalgebra 
\begin{equation*}
{\bf C}(\B{Z}^{\geq 0},\sim) \cong \langle x_n \mid n\geq0\rangle
\end{equation*}
with the coalgebra structure given by
\begin{equation*}
\D(x_n) = \sum_{r=0}^n x_r \ot x_{n-r}, \qquad \ve(x_n) = \d_{n,0}.
\end{equation*}
We note that ${\bf C}(\B{Z}^{\geq 0},\sim)$ is the divided powers coalgebra, \cite[Chpt. XII]{Sweedler-book}, which we denote by $\Div$. Its convolution algebra ${\bf C}(\B{Z}^{\geq 0},\sim)^\ast$ is the reduced incidence algebra ${\bf R}(\B{Z}^{\geq 0},\sim)$ of Example \ref{ex1}.

\subsection{The binomial coalgebra $\Bin$}\label{BinomialCoalgebra}
Let $B(S)$ be the family of all finite subsets of a countable set $S$, ordered by inclusion. Let, as earlier, $\sim$ be the interval isomorphism. Then the type $\wbar{[A,B]}$ is determined by the number $|B\backslash A|$ of the elements of the set $B\backslash A$. As a result, we obtain the reduced incidence coalgebra 
\begin{equation*}
{\bf C}(B(S),\sim) \cong \langle y_n  \mid n\geq 0 \rangle
\end{equation*}
with the coalgebra structure given by
\begin{equation*}
\D(y_n) = \sum_{r=0}^n \binom{n}{r} y_r \ot y_{n-r}, \qquad \ve(y_n) = \d_{n,0}.
\end{equation*}
This is the binomial coalgebra, \cite[Def. 3.2.18]{SpiegelODonnell-book} which we denote by $\Bin$ in the sequel. The convolution algebra ${\bf C}(B(S),\sim)^\ast$ of this reduced incidence coalgebra is the reduced incidence algebra ${\bf R}(B(S),\sim)$ of Example \ref{ex2}.

\subsection{The quantum binomial coalgebra $\Bin_q$}\label{QuantumBinomialCoalgebra}
For $L(V)$ and $\sim$ as in Example \ref{ex4} we obtain the Eulerian coalgebra ${\bf C}(L(V),\sim) \cong \B{Z}^{\geq 0}$ of \cite[Def. 3.2.28]{SpiegelODonnell-book} given by
\begin{equation*}
\D(y_n) = \sum_{r=0}^n \frac{(q^n-1)(q^n-q)\ldots (q^n-q^{r-1})}{(q^r-1)(q^r-q)\ldots (q^r-q^{r-1})} y_r \ot y_{n-r}, \qquad \ve(y_n) = \d_{n,0},
\end{equation*}
which can be written more concisely as
\begin{equation*}
  \D(y_n) = \sum_{r=0}^n {n\brack k}_q y_r\otimes y_{n-r}
\end{equation*}
for every $n\geq 0$.  We denote this coalgebra by $\Bin_q$. Finally, the convolution algebra ${\bf C}(L(V),\sim)^\ast$ is the reduced incidence algebra ${\bf R}(L(V),\sim)$ of Example \ref{ex4}.

\subsection{The Dirichlet coalgebra $\Dir$}\label{DirichletCoalgebra}
Let $L(\B{Q})$ and $\sim$ be as in Example \ref{ex3}. Then we have the reduced incidence coalgebra ${\bf C}(L(\B{Q}),\sim)\cong\langle z_n \mid n\in \B{N} \rangle$ which is given by
\begin{equation*}
\D(z_n) = \sum_{ij=n} z_i \ot z_j, \qquad \ve(z_n) = \d_{n,1},
\end{equation*}
called the Dirichlet coalgebra in \cite[Def. 3.2.25]{SpiegelODonnell-book}. We denote the Dirichlet coalgebra by $\Dir$ in the sequel. It is evident that the convolution algebra ${\bf C}(L(\B{Q}),\sim)^\ast$ is isomorphic to the reduced incidence algebra ${\bf R}(L(\B{Q}),\sim)$ of Example \ref{ex3}.

\subsection{The graph incidence coalgebra}\label{graph-coalg-comm}
Let $\C{P}(\C{G})$ and $\sim$ be as in Subsection \ref{graph-incidence}. We thus have the reduced incidence coalgebra ${\bf C}(\C{P}(\C{G}),\sim)$ of isomorphism classes of graphs in $\C{G}$, with the coproduct
\begin{equation*}
\D([G]) = \sum_{U\subseteq V} [G\mid U] \ot [G\mid (V-U)]
\end{equation*} 
for any $G=(V,E)\in \C{G}$. Hence, the convolution algebra ${\bf C}(\C{P}(\C{G}),\sim)^\ast$ is isomorphic with the commutative graph incidence algebra ${\bf I}(\C{P}(\C{G}),\sim)$ of Subsection \ref{graph-incidence}. 

It follows from \cite[Thm. 10.2]{Schm94} that ${\bf C}(\C{P}(\C{G}),\sim) \cong k[\lambda(\widetilde{\C{G}_0})]$ as coalgebras, where
\begin{equation*}
\lambda: {\bf C}(\C{P}(\C{G}),\sim) \lra P({\bf C}(\C{P}(\C{G}),\sim)), \qquad \lambda([G]):= \sum_\pi(-1)^{|\pi|-1}(|\pi|-1)!\prod_{B\in \pi}[G\mid B],
\end{equation*}
is the projection onto the set of primitive elements, and the sum is over the set of all partitions $\pi$ of the vertex set $V$.

\begin{remark}
Following the terminology of Subsection \ref{graph-incidence} and Subsection \ref{graph-coalg-comm}, we note that if $\C{G}$ is the set of all (finite) graphs with no edges, then ${\bf C}(\C{P}(\C{G}),\sim) \cong \Bin$, see \cite[Ex. 12.2]{Schm94}. If, on the other hand, $\C{G}$ is the disjoint unions of complete graphs, then  ${\bf C}(\C{P}(\C{G}),\sim) \cong \Div$, see \cite[Ex. 12.3]{Schm94}.
\end{remark}

\section{Isomorphisms between the fundamental examples}\label{Isomorphisms}

\subsection{$\Div$, $\Bin$ and $\Bin_q$}

In this subsection we show that the (reduced incidence) coalgebras $\Div$, $\Bin$ and $\Bin_q$ are isomorphic as coalgebras. To this end, we observe that the mapping
\begin{equation}\label{theta-Bin}
\t:\Bin \lra k[X],\qquad y_n\mapsto X^n
\end{equation}
is a coalgebra isomorphism. Indeed, for every $n\geq 0$ we have
\begin{equation*}
(\t\ot\t)  \Delta(y_n)  = \sum_{i=0}^{n}\binom{n}{i}\t(y_i)\otimes \t(y_{n-i})
  = \sum_{i=0}^{n}\binom{n}{i}X^i\otimes X^{n-i}= \D(X^n).
\end{equation*}
We thus conclude the following.

\begin{proposition}\label{BinPoly}
The binomial coalgebra $\Bin$ is isomorphic with the polynomial coalgebra $k[X]$ as coalgebras.
\end{proposition}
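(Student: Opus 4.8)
The plan is to promote the linear map displayed just above the statement, namely $\t:\Bin\lra k[X]$ with $y_n\mapsto X^n$, to a full coalgebra isomorphism by supplementing the comultiplication computation already given with the two remaining verifications: bijectivity and compatibility with the counits. First I would record that $\t$ is a linear isomorphism essentially for free. The family $\{y_n\}_{n\geq 0}$ is a $k$-basis of $\Bin$ by the very construction of the reduced incidence coalgebra in Subsection~\ref{BinomialCoalgebra}, while $\{X^n\}_{n\geq 0}$ is a $k$-basis of $k[X]$, and $\t$ carries the first basis bijectively onto the second; extending linearly therefore yields a bijection.

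Next I would pin down the coalgebra structure on $k[X]$ that makes the statement meaningful: it is the one in which $X$ is primitive, so that $\D(X)=X\ot 1+1\ot X$ and $\ve(X)=0$. With this choice, the binomial theorem applied to $\D(X)^n=(X\ot 1+1\ot X)^n$ gives $\D(X^n)=\sum_{i=0}^n\binom{n}{i}X^i\ot X^{n-i}$, which is exactly the expression appearing in the computation preceding the proposition; this is precisely what makes $(\t\ot\t)\D(y_n)=\D\t(y_n)$ hold for every $n\geq 0$. It then remains only to check counit compatibility, and this is immediate since $\ve(X^n)=\d_{n,0}=\ve(y_n)$ for all $n\geq 0$.

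Since $\t$ is a linear bijection intertwining both the comultiplications and the counits, it is an isomorphism of coalgebras, which establishes the proposition. I do not anticipate a genuine obstacle here: the whole content is the identification of the binomial coefficients $\binom{n}{i}$ occurring in $\D(y_n)$ with those produced by expanding the $n$-th power of a primitive element, so the argument is a short bookkeeping verification rather than a construction requiring a new idea. If anything, the only point worth stating carefully is that the relevant coalgebra structure on $k[X]$ is the primitively generated one, so that the comparison with $\Bin$ is legitimate.
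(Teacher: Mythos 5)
Your proposal is correct and follows essentially the same route as the paper: the paper's argument is precisely the displayed computation $(\t\ot\t)\D(y_n)=\sum_{i=0}^n\binom{n}{i}X^i\ot X^{n-i}=\D(X^n)$ for the map $\t(y_n)=X^n$, with $k[X]$ carrying the primitively generated coalgebra structure. Your additional remarks (basis-to-basis bijectivity and the counit check $\ve(X^n)=\d_{n,0}=\ve(y_n)$) merely make explicit what the paper leaves implicit, so there is no substantive difference.
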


We proceed to show that $\Bin$, $\Bin_q$ and $\Div$ are the same, and hence they are all are isomorphic with $k[X]$, as coalgebras.

\begin{proposition}\label{DivBin}
The coalgebras $\Div$, $\Bin$ and $\Bin_q$ are isomorphic.
\end{proposition}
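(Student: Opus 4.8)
The plan is to reduce everything to the polynomial coalgebra $k[X]$, for which the coalgebra isomorphism $\t:\Bin\to k[X]$, $y_n\mapsto X^n$, is already in hand by Proposition~\ref{BinPoly} (so that on $k[X]$ the coproduct is $\D(X^n)=\sum_{r=0}^n\binom{n}{r}X^r\ot X^{n-r}$, i.e.\ $X$ is primitive). It then suffices to produce coalgebra isomorphisms $\Div\to k[X]$ and $\Bin_q\to k[X]$, and I expect both to be of the simplest possible form, namely a rescaling of the distinguished basis. Throughout, the standing assumption $\mathrm{char}\,k=0$ is exactly what guarantees that the rescaling factors are invertible.

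First I would treat $\Div$. Define $\vp:\Div\to k[X]$ by $\vp(x_n)=X^n/n!$. Dividing the binomial coproduct by $n!$ gives
\begin{equation*}
\D\Big(\tfrac{X^n}{n!}\Big)=\sum_{r=0}^n\frac{1}{r!\,(n-r)!}\,X^r\ot X^{n-r}=\sum_{r=0}^n\frac{X^r}{r!}\ot\frac{X^{n-r}}{(n-r)!},
\end{equation*}
which is precisely $(\vp\ot\vp)\D(x_n)$ for the divided-powers coproduct $\D(x_n)=\sum_r x_r\ot x_{n-r}$; the counit is visibly preserved. Since each $1/n!$ is a nonzero scalar, $\vp$ is a bijection, hence a coalgebra isomorphism.

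Next I would treat $\Bin_q$ by the same device, seeking scalars $c_n\in k$ for which $\psi(y_n)=c_n X^n$ is a coalgebra map $\Bin_q\to k[X]$. Comparing $(\psi\ot\psi)\D(y_n)$ with $\D(\psi(y_n))$ coefficient by coefficient, this reduces to the system
\begin{equation*}
{n\brack r}_q\,c_r\,c_{n-r}=\binom{n}{r}\,c_n,\qquad 0\leq r\leq n.
\end{equation*}
The choice $c_n=[n]_q!/n!$ solves it: substituting ${n\brack r}_q=[n]_q!/([r]_q!\,[n-r]_q!)$ and $\binom{n}{r}=n!/(r!\,(n-r)!)$, both sides collapse to $[n]_q!/(r!\,(n-r)!)$. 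The counit is again preserved, and since $q$ is a genuine prime power each quantum integer $[k]_q=1+q+\cdots+q^{k-1}$ is a positive integer, so $c_n\neq 0$ in the characteristic-zero field $k$ and $\psi$ is bijective.

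Composing with the inverse of $\t$ from Proposition~\ref{BinPoly} then yields $\Div\cong\Bin$ and $\Bin_q\cong\Bin$, which establishes the claim. I do not anticipate a genuine obstacle: the entire content lies in spotting the correct rescaling factors $1/n!$ and $[n]_q!/n!$, and the only thing to keep track of is that $\mathrm{char}\,k=0$, together with the positivity of the quantum integers, makes these factors invertible, so that the basis-rescalings are honest isomorphisms rather than merely coalgebra morphisms.
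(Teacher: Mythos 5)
Your proof is correct and is essentially the paper's own argument: the paper defines $\gamma:\Div\to\Bin$, $x_n\mapsto y_n/n!$, and $\gamma_q:\Div\to\Bin_q$, $x_n\mapsto y_n/[n]_q!$, directly, and your composites $\t^{-1}\circ\vp$ and $\psi^{-1}\circ\vp$ are exactly these maps, merely bookkept through $k[X]$ via Proposition~\ref{BinPoly}. The rescaling factors $1/n!$ and $1/[n]_q!$ and their invertibility (characteristic zero, $q$ a prime power) are the same in both treatments, so nothing substantive differs.
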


\begin{proof}
Let us first define
  \begin{equation}\label{gamma}
\gamma\colon \Div\to \Bin,\qquad    \gamma(x_n) = \frac{y_n}{n!},
  \end{equation}
for any $n\geq 0$. The fact that \eqref{gamma} is an isomorphism of
  $k$-vector spaces is clear. On the basis elements we have
  \begin{align*}
 \Delta\gamma(x_n) 
    = & \sum_{i=0}^n \frac{1}{n!}\binom{n}{i}y_i\otimes y_{n-i}
    =  \sum_{i=0}^n \frac{y_i}{i!}\otimes \frac{y_{n-i}}{(n-i)!}
    =  \sum_{i=0}^n \gamma(x_i)\otimes\gamma(x_{n-i})\\
    = & (\gamma\otimes\gamma)\Delta(x_n)
  \end{align*}
  proving that \eqref{gamma} is an isomorphism of coalgebras. Similarly 
    \begin{equation}\label{gamma}
\gamma_q\colon \Div\to \Bin_q,\qquad    \gamma_q(x_n) = \frac{y_n}{[n]_q!}
  \end{equation}
  establishes an isomorphism between $\Div$ and $\Bin_q$.
\end{proof}

We record the following proposition, which follows directly from the definition of a tree, to unify these examples.

\begin{proposition}\label{Fundamental1}
  A finite poset $(P,\leq)$ has the property that every interval
  $[x,y]$ of $(P,\leq)$ is isomorphic to the interval $[0,n] = \{0,\ldots,n\}$ in
  $(\B{Z}^{\geq0},\leq)$ for some $n \geq 0$ if and only if the Hasse diagram of $P$
  is a forest.
\end{proposition}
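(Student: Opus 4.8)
The plan is to prove both implications by contraposition, exploiting the standard dictionary between saturated chains in an interval of $(P,\leq)$ and paths in the Hasse diagram of $P$: a saturated chain $x = p_0 \lessdot p_1 \lessdot \cdots \lessdot p_k = y$ is precisely a path from $x$ to $y$ in the Hasse diagram all of whose edges are ``upward'' covers, and conversely any such path is a saturated chain. Recall that, by the definition of a tree used above, a forest is exactly a graph in which any two vertices are joined by \emph{at most one} path; so in order to detect that the Hasse diagram is \emph{not} a forest it suffices to produce two distinct paths between some pair of vertices.

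For the direction ``forest $\Rightarrow$ every interval is a chain'' I would argue contrapositively, and I expect this half to be routine. Suppose some interval $[x,y]$ is not a chain, so there are incomparable elements $a,b \in [x,y]$, \ie $x \leq a,b \leq y$ with $a$ and $b$ incomparable. Choosing a saturated chain from $x$ up to $a$ and on to $y$, and likewise one through $b$, yields two paths $\pi_a$ and $\pi_b$ from $x$ to $y$ in the Hasse diagram. They are distinct: a saturated chain is totally ordered, so if $a$ occurred on $\pi_b$ it would be comparable to $b$, contrary to hypothesis. Two distinct paths between $x$ and $y$ mean the Hasse diagram is not a forest, which is what we want.

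The converse, ``every interval is a chain $\Rightarrow$ the Hasse diagram is a forest'', is where the real work lies, and I expect it to be the main obstacle. Arguing contrapositively, I would begin with a cycle $C$ in the Hasse diagram, choose a vertex $y$ that is maximal in $(P,\leq)$ among the vertices of $C$, and note that its two (distinct) $C$-neighbours $a,b$ are then both covered by $y$: the cover $\{v,y\}$ cannot have $y \lessdot v$ without contradicting maximality of $y$. Moreover $a$ and $b$ are incomparable, for $a<b$ would give $a<b\lessdot y$, contradicting $a \lessdot y$. Thus a cycle readily produces a pair of incomparable elements lying below a single element $y$. To finish one must exhibit a \emph{common lower bound} $x\leq a,b$, since then $a,b\in[x,y]$ and $[x,y]$ fails to be a chain. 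Producing such an $x$ — the natural attempt being to pass to a shortest cycle and descend along it from $a$ and $b$ to a common ``valley'' — is the crux, and it is precisely here that additional structure of $P$ must enter. Indeed, the finite poset with two minimal elements $a,b$ and two maximal elements $w,y$, each minimal element lying below each maximal element, has Hasse diagram a $4$-cycle, yet every one of its intervals is a singleton or a $2$-element chain. So the incomparable elements extracted from a cycle need not share any common bound in an arbitrary finite poset, and the decisive step is to use the ambient features of the posets at hand — connectivity together with the presence of the relevant bounds, as in the incidence settings of the preceding subsections — to guarantee that the incomparable pair coming from a cycle always lands inside a common interval.
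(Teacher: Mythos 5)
Your first half is correct, and it is (up to contraposition) the same argument the paper gives for the ``forest $\Rightarrow$ chain intervals'' direction: the paper argues directly that in a forest the unique path between comparable $x\leq y$ is the saturated chain and every $z\in[x,y]$ lies on it, while you argue that incomparable $a,b\in[x,y]$ yield two monotone paths $\pi_a\neq\pi_b$ from $x$ to $y$; these are equivalent, and your distinctness argument (every vertex of $\pi_b$ is comparable with $b$, so $a\notin\pi_b$) is sound.

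On the other direction you have in fact caught a genuine error in the paper, not merely failed to find a proof. The paper's proof of ($\Rightarrow$) runs exactly along the lines you rejected: from a cycle it extracts two elements $x,y$ with two distinct paths between them in the Hasse diagram, and asserts without further argument that $[x,y]$ cannot be isomorphic to $\{0,\ldots,n\}$. This is precisely the gap you isolated: the two paths provided by an undirected cycle need not be monotone, and $x,y$ need not even be comparable. Your four-element poset (minimal elements $a,b$, maximal elements $w,y$, with each minimal below each maximal) is a valid counterexample to the proposition as stated: its Hasse diagram is a $4$-cycle, yet every interval is a singleton or a two-element chain. It even refutes the paper's proof step directly: the comparable pair $a< w$ does admit two paths (the edge through no intermediate vertex, and the detour through $y$ and $b$), yet $[a,w]=\{a,w\}$ is a chain, because the detour is not monotone. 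Your closing remark can be sharpened: the ($\Rightarrow$) direction becomes true, and your cycle-plus-maximal-vertex argument completes, under any hypothesis guaranteeing that two elements with a common upper bound have a common lower bound --- for instance if every connected component of $P$ has a least element, as all the incidence posets of this paper do (take $y$ maximal on the cycle, its cycle-neighbours $a,b$ are incomparable with $a,b\lessdot y$, and a common lower bound $x$ puts them in the non-chain interval $[x,y]$). Since the sequel (Proposition~\ref{Fundamental2}) only invokes the true direction, the defect does not propagate into the paper's later results.
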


\begin{proof}
  ($\Rightarrow$) We will prove the contrapositive of the statement in
  this direction.  Let $P$ be a poset whose Hasse diagram contains
  a cycle, that is, let there be two elements $x,y\in P$ such that
  there are two paths from $x\in P$ to $y\in P$ in the Hasse diagram of $P$.
  Then the interval $[x,y]$ can not be isomorphic to an interval of
  the form $\{0,\ldots,n\}\subseteq\B{Z}^{\geq0}$ with its canonical order
  structure.

  ($\Leftarrow$) Let the Hasse diagram of $P$ be a forest. Then, given
  two comparable elements $x\leq y$, there is a unique path from $x\in P$
  to $y\in P$ in the Hasse diagram.  Hence, the interval $[x,y]$ is
  isomorphic to $\{0,\ldots,n\}\subseteq\B{Z}^{\geq0}$, where $n\geq0$ is the number of
  elements in $[x,y]$.
\end{proof}

The \emph{depth} of a forest is the supremum of lengths of all paths
in that forest.  As a result, we identify the coalgebras $\Div$,
$\Bin$ and $\Bin_q$ with the reduced incidence coalgebra of a poset
whose Hasse diagram is a forest whose depth is not finite.

\begin{proposition}\label{Fundamental2}
  Let $(P_{\rm forest},\leq)$ be a locally finite poset whose Hasse diagram is a forest whose depth is not finite, and let $\sim$ be the
interval isomorphism. Then the reduced incidence coalgebra ${\bf C}(P_{\rm forest},\sim)$ is 
isomorphic to the coalgebras $\Div$, $\Bin$ and $\Bin_q$.
\end{proposition}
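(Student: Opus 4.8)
The plan is to construct a direct coalgebra isomorphism ${\bf C}(P_{\rm forest},\sim)\lra \Div$ and then invoke Proposition \ref{DivBin} to transport it onto $\Bin$ and $\Bin_q$. The key structural input is the unique-path property of forests used in the proof of Proposition \ref{Fundamental1}: since $(P_{\rm forest},\leq)$ is locally finite with a forest Hasse diagram, any two comparable elements $x\leq y$ are joined by a unique path, so the interval $[x,y]$ is isomorphic to the chain $\{0,\ldots,n\}\subseteq\B{Z}^{\geq 0}$ for a unique $n\geq 0$. Consequently the type $\wbar{[x,y]}$ under interval isomorphism is completely determined by this integer $n$, the length of $[x,y]$.

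First I would record the resulting bijection between types and non-negative integers. The assignment $\wbar{[x,y]}\mapsto n$, where $[x,y]\cong\{0,\ldots,n\}$, is well defined and injective because $\sim$ is exactly interval isomorphism and the isomorphism type of a finite chain is determined by its length. Here is where the depth hypothesis enters: since the depth of the forest is not finite, there exist chains of arbitrary length, so for every $n\geq 0$ one may pick a subchain $v_0<v_1<\cdots<v_n$, and by the forest property the interval $[v_0,v_n]$ equals $\{v_0,\ldots,v_n\}$ and has type $n$. Thus every $n\geq 0$ is realized, and the assignment is a bijection onto $\B{Z}^{\geq 0}$, i.e. onto the index set of the basis $\{x_n\}$ of $\Div$.

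Next I would define the linear map
\begin{equation*}
\varphi:{\bf C}(P_{\rm forest},\sim)\lra \Div, \qquad \varphi(\wbar{[x,y]})=x_n \ \text{ if } [x,y]\cong\{0,\ldots,n\},
\end{equation*}
which by the previous paragraph is an isomorphism of $k$-vector spaces. To see it is a coalgebra map, fix an interval $[x,y]$ of type $n$ and apply the defining comultiplication of a reduced incidence coalgebra. Under the isomorphism $[x,y]\cong\{0,\ldots,n\}$, an element $z\in[x,y]$ corresponds to some $r\in\{0,\ldots,n\}$, with $[x,z]$ of type $r$ and $[z,y]$ of type $n-r$, whence
\begin{equation*}
(\varphi\ot\varphi)\D(\wbar{[x,y]})=\sum_{z\in[x,y]}\varphi(\wbar{[x,z]})\ot\varphi(\wbar{[z,y]})=\sum_{r=0}^n x_r\ot x_{n-r}=\D(x_n)=\D\varphi(\wbar{[x,y]}).
\end{equation*}
Compatibility with the counit is immediate, since $[x,y]$ has type $0$ precisely when $x=y$, so $\ve_{\Div}(\varphi(\wbar{[x,y]}))=\d_{n,0}=\ve(\wbar{[x,y]})$. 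Combined with Proposition \ref{DivBin}, this delivers the asserted isomorphisms with $\Div$, $\Bin$ and $\Bin_q$.

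The verification of the coalgebra axioms is routine once the bookkeeping above is in place; the only genuine subtlety is the role of the depth hypothesis. Without it, a forest of finite depth $d$ would realize only the types $0,1,\ldots,d$, producing a finite-dimensional coalgebra that cannot be isomorphic to the infinite-dimensional $\Div$. I therefore expect the point requiring care to be the surjectivity of the type assignment, namely that non-finite depth forces intervals of every length $n$ to occur, rather than the check that $\varphi$ intertwines the comultiplications.
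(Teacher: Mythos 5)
Your proof is correct and takes essentially the same route the paper does: the paper deduces Proposition \ref{Fundamental2} directly from Proposition \ref{Fundamental1} (every interval is a finite chain, so under interval isomorphism the types are indexed by lengths $n\in\B{Z}^{\geq 0}$, giving ${\bf C}(P_{\rm forest},\sim)\cong\Div$) together with Proposition \ref{DivBin}. Your write-up simply makes explicit two points the paper leaves implicit, namely that infinite depth yields surjectivity of the type map (via initial segments of arbitrarily long Hasse-diagram paths, which is the right way to get intervals of exact type $n$) and the routine check that the induced linear bijection intertwines the comultiplications and counits.
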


The coalgebra $\Dir$ is slightly different. However, similar to \eqref{theta-Bin}, we have an isomorphism
\begin{equation*}
\eta: \Dir \lra k[\{X_p\mid p\,\,\text{prime}\}],\qquad z_n\mapsto X_{p_1}\ldots X_{p_{s(n)}},
\end{equation*} 
of coalgebras, where $n=p_1\ldots p_{s(n)}$ is the prime factorization of $n\in \B{N}$.

\section{Cohomological Background}

\subsection{Co(algebra) Hochschild cohomology}
Referring the reader to \cite{AbraWeib02,BrzeWisb-book,Doi81,KaygKhal06,Kayg12,KaygSutl14} for the basics of the (co)coalgebra Hochschild cohomology and the $\cotor$-groups, we recall in this subsection the computational machinery introduced in \cite{KaygSutl14}.

Let $\C{C}\to \C{D}$ be a coflat coalgebra coextension, and $\C{Z}:= \C{C}\oplus \C{D}$ an auxiliary coalgebra with the comultiplication
\begin{equation*}
\Delta(y) = y_{(1)}\otimes y_{(2)} \quad\text{ and }\quad
   \Delta(x) = x_{(1)}\otimes x_{(2)} + \pi(x_{(1)})\otimes x_{(2)} + x_{(1)}\otimes \pi(x_{(2)})
\end{equation*}
and the counit
\begin{equation*}
\ve(x+y) = \ve(y),
\end{equation*}
for any $x\in \C{C}$ and $y\in \C{D}$.

Let also $V,W$ be $\C{C}$-comodules with opposite pairing. We note that in this case a right (resp. left) $\C{C}$-comodule $V$, by $v\mapsto v\ns{0}\ot v\ns{1}$ (resp. $v\mapsto v\ns{-1}\ot v\ns{0}$) is a right (resp. left) $\C{Z}$-comodule via $v\mapsto v\ns{0}\ot (v\ns{1},\pi(v\ns{1}))$ (resp. $v\mapsto (v\ns{-1},\pi(v\ns{-1})\ot v\ns{0})$).

In this case, the $\cotor$-groups of $\C{Z}$ are isomorphic with the $\cotor$-groups of $\C{C}$, which we record below.

\begin{lemma}
Given a coextension $\pi:\C{C}\to \C{D}$ of coalgebras, let $V,W$ be two $\C{C}$-comodules of opposite parity. Then the short exact sequence
\begin{equation*}
  0 \to \C{D} \xrightarrow{\ i\ } \C{Z} \xrightarrow{\ p\ } \C{C} \to 0
  \qquad \text{ where }
  \qquad i: y\mapsto (0,y)
  \qquad p: (x,y)\mapsto x
\end{equation*}
induces an isomorphism
\begin{equation*}
\cotor^n_\C{Z}(V,W)\cong \cotor^n_\C{C}(V,W), \qquad n\geq 0.
\end{equation*}
\end{lemma}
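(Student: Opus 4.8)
The plan is to show that, after a $\pi$-twisted change of basis, the auxiliary coalgebra $\C{Z}$ is nothing but the coalgebra coproduct of $\C{C}$ and $\C{D}$, and that $V$ and $W$ are supported entirely on the $\C{C}$-factor; the asserted isomorphism then drops out of the additivity of $\cotor$ over coproducts of coalgebras. First I would introduce the section $\iota\colon\C{C}\to\C{Z}$, $\iota(x)=(x,\pi(x))$, of the projection $p$. A short computation shows $\iota$ is a counital coalgebra map: counitality reads $\ve(\iota(x))=\ve_\C{D}(\pi(x))=\ve_\C{C}(x)$, while comultiplicativity,
\[
\D_\C{Z}(\iota(x))=\iota(x_{(1)})\ot\iota(x_{(2)}),
\]
follows by expanding the three summands of $\D_\C{Z}(x)$ together with $\D_\C{Z}(\pi(x))=\pi(x_{(1)})\ot\pi(x_{(2)})$ and collecting. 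Hence $\iota(\C{C})$ and $i(\C{D})$ are subcoalgebras of $\C{Z}$ that intersect trivially and span, the comultiplication is block diagonal for the decomposition $\C{Z}=\iota(\C{C})\oplus i(\C{D})$ since $\D_\C{Z}(\iota(\C{C}))\subseteq\iota(\C{C})\ot\iota(\C{C})$ and $\D_\C{Z}(i(\C{D}))\subseteq i(\C{D})\ot i(\C{D})$, and the counit restricts to $\ve_\C{C}$ and $\ve_\C{D}$ on the two summands. Thus $\iota$ splits the sequence $0\to\C{D}\xra{i}\C{Z}\xra{p}\C{C}\to0$ over the category of coalgebras and yields a coalgebra isomorphism $\C{Z}\cong\C{C}\oplus\C{D}$ onto the coproduct.

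Next I would note that $V$ and $W$ live on the $\C{C}$-summand. The prescribed $\C{Z}$-coaction on $V$ is $v\mapsto v\ns{0}\ot(v\ns{1},\pi(v\ns{1}))=v\ns{0}\ot\iota(v\ns{1})$, whose image lies in $V\ot\iota(\C{C})$, and dually for the left $\C{Z}$-coaction on $W$. Under the equivalence between $\C{Z}$-comodules and pairs consisting of a $\C{C}$-comodule and a $\C{D}$-comodule induced by the decomposition $\C{Z}\cong\C{C}\oplus\C{D}$, the comodules $V$ and $W$ therefore correspond to the pairs $(V,0)$ and $(W,0)$.

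Finally I would invoke the additivity of the cotensor product, and hence of its derived functors, over a coproduct of coalgebras. For $\C{Z}=\C{C}\oplus\C{D}$ the mixed cotensor products vanish, because the right and left structure maps of a mixed pair land in complementary summands of $M\ot\C{Z}\ot N$, so that
\[
\cotor^n_\C{Z}\big((V_1,V_2),(W_1,W_2)\big)\cong\cotor^n_\C{C}(V_1,W_1)\oplus\cotor^n_\C{D}(V_2,W_2)
\]
for every $n\geq0$. Specializing to $(V,0)$ and $(W,0)$ collapses the $\C{D}$-term and gives $\cotor^n_\C{Z}(V,W)\cong\cotor^n_\C{C}(V,W)$, the isomorphism being the one induced by $\iota$, with inverse induced by $p$.

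The step that needs the most care is this last one: I must check that the splitting of the cotensor product survives passing to the derived functors. This is exactly where the decomposition of the comodule categories as a product enters, guaranteeing that injective (equivalently coflat) resolutions split along the two summands; coflatness of the coextension is part of the standing hypotheses of the ambient framework. A concrete alternative, closer in spirit to the cobar machinery of \cite{KaygSutl14}, is to argue at the chain level: $\iota$ and $p$ induce chain maps of the unnormalized cobar complexes with $p_*\iota_*=\Id$, and the complementary summand, spanned by words containing at least one $i(\C{D})$-factor, is contractible — on the associated graded of the filtration by the number of $\C{C}$-factors, the homotopy applying $\ve_\C{D}$ to the first $\C{D}$-factor contracts it — so that $\iota_*$ is a quasi-isomorphism and the same conclusion follows.
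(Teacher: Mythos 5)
Your proof is correct, and it takes a genuinely different route from the paper's: the paper disposes of this lemma in a single line, by appealing to the proof of Lemma 4.10 of \cite{FariSolo00} with the coefficients taken to be $W\ot V$, whereas you give a self-contained structural argument. Your key observation checks out: since $\pi$ is a (counital) coalgebra map, expanding $\Delta_{\C{Z}}(\iota(x))=\Delta_{\C{Z}}(x)+\Delta_{\C{Z}}(\pi(x))$ produces exactly the four terms of $\big(x_{(1)}+\pi(x_{(1)})\big)\ot\big(x_{(2)}+\pi(x_{(2)})\big)=\iota(x_{(1)})\ot\iota(x_{(2)})$, so the $\pi$-twisted comultiplication on $\C{Z}$ is literally the block-diagonal one in the basis $\iota(\C{C})\oplus i(\C{D})$, and the prescribed $\C{Z}$-coactions on $V$ and $W$ are corestrictions along $\iota$, putting the coefficients entirely on the $\C{C}$-summand; additivity of $\cotor$ over the coproduct (the comodule category of $\C{C}\sqcup\C{D}$ being the product of the two comodule categories, with the mixed cotensor terms killed by counitality, as you indicate) then yields the isomorphism. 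Two small points are worth tightening: first, the projection $p$ is \emph{not} counital, since $\ve_{\C{Z}}\big((x,y)\big)=\ve_{\C{D}}(y)$ while $\ve_{\C{C}}(p(x,y))=\ve_{\C{C}}(x)$, so the sequence does not literally split in the category of counital coalgebras — but your argument never needs that, only that $\iota$ is a counital coalgebra map with $p\circ\iota=\Id$ and that $\iota(\C{C})$ and $i(\C{D})$ are complementary subcoalgebras (and $p$, being comultiplicative and compatible with the coactions, still induces the chain map you use, since the cobar differential involves no counits); second, no coflatness hypothesis is needed for this lemma — it enters only afterwards, for the filtration spectral sequence of Theorem \ref{thm-spec-seq} — so your hedge about the ``standing hypotheses'' can simply be dropped. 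What your route buys is transparency: it demystifies the auxiliary coalgebra $\C{Z}$ (the twist is a change of basis), and your chain-level variant exhibits $\iota_*$ as an explicit quasi-isomorphism of cobar complexes, which meshes well with the filtration arguments the paper uses later; what the paper's route buys is brevity and uniformity with the machinery of \cite{KaygSutl14} and \cite{FariSolo00}, from which the construction of $\C{Z}$ is taken.
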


\begin{proof}
The proof follows from the proof of \cite[Lemma 4.10]{FariSolo00} taking the coefficients to be $W \ot V$.
\end{proof}

Now, let $\C{C}$ be coflat both as a
left and a right $\C{D}$-comodule, \ie $\cotor^n_\C{D}(V,\C{C}) = 0$ for $n\geq 1$, and for any right $\C{D}$-comodule $V$, and $\cotor^n_\C{D}(\C{C},W) = 0$ for $n\geq 1$, and for any left $\C{D}$-comodule $W$. Then consider $\cotor^\ast(V,\C{Z},W)$ with the decreasing filtration
\begin{equation*}
F^{n+p}_p =  \left\{ \begin{array}{ll}
          \bigoplus_{n_0+\cdots+n_p=n} V\otimes \C{Z}^{\otimes n_0}\otimes \C{C}\otimes\cdots\otimes \C{Z}^{n_{p-1}}\otimes \C{C}\otimes \C{Z}^{\otimes n_p}\ot W, & p\geq 0 \\
          0, & p<0.
        \end{array}
\right.
\end{equation*}
Then the spectral sequence associated to this filtration is
\begin{equation*}
E^{i,j}_0 = F^{i+j}_i / F^{i+j}_{i+1}
   = \bigoplus_{n_0+\cdots+n_i=j} V\otimes \C{D}^{\otimes n_0}\otimes \C{C}\otimes\cdots\otimes \C{D}^{n_{i-1}}\otimes \C{C}\otimes \C{D}^{\otimes n_i}\ot W,
\end{equation*}
and by the coflatness assumption, on the vertical direction it computes
\begin{equation*}
E_1^{i,j}=\cotor^j_\C{D}(V,\C{C}^{\Box_\C{D}\,i}\,\Box_\C{D}\,W).
\end{equation*}

As a result, we obtain the following analogue of \cite[Thm. 3.1]{KaygSutl14}.

\begin{theorem}\label{thm-spec-seq}
Let $\pi:\C{C}\lra \C{D}$ be a coalgebra coextension and $V,W$ be two $\C{C}$-comodules of opposite parity. Let also $\C{C}$ be coflat both as a left and a right $\C{D}$-comodule. Then there is a spectral sequence whose $E_1$-term is of the form
\begin{equation*}
E_1^{i,j}=\cotor^j_\C{D}(V,\C{C}^{\Box_\C{D}\,i}\,\Box_\C{D}\,W),
\end{equation*}
converging to $\cotor^{i+j}_\C{C}(V,W)$.
\end{theorem}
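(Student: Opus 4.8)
The plan is to read off the asserted spectral sequence as that of the filtered cobar complex already assembled above, the only genuine work being the identification of the $E_1$-page from the coflatness hypothesis. By the preceding lemma there is an isomorphism $\cotor^\bullet_\C{Z}(V,W)\cong\cotor^\bullet_\C{C}(V,W)$, so it suffices to produce a spectral sequence converging to $\cotor^\bullet_\C{Z}(V,W)$. I would compute the latter from the cobar complex whose degree-$m$ term is $V\ot\C{Z}^{\ot m}\ot W$, with differential assembled from the comultiplication of $\C{Z}$ and the $\C{Z}$-coactions of $V$ and $W$, and filter it by the displayed $F^{n+p}_p$, which records the number of tensor slots occupied by the summand $\C{C}\subseteq\C{Z}$.

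First I would check that this is a filtration by subcomplexes. Splitting a $\C{D}$-slot by $\Delta(y)=y_{(1)}\ot y_{(2)}$ leaves the number of $\C{C}$-slots unchanged, and the two mixed terms $\pi(x_{(1)})\ot x_{(2)}$ and $x_{(1)}\ot\pi(x_{(2)})$ of $\Delta(x)$ likewise preserve it, whereas the diagonal term $x_{(1)}\ot x_{(2)}$ and the $\C{C}$-valued parts of the boundary coactions raise it by one. Thus the differential preserves or raises the index $p$, so each $F^\bullet_p$ is a subcomplex. In each total degree $m$ the index $p$ runs only over $0\le p\le m$, so the filtration is bounded; the associated spectral sequence therefore converges, and its abutment is $\cotor^{i+j}_\C{Z}(V,W)\cong\cotor^{i+j}_\C{C}(V,W)$.

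The crux is the identification of the $E_1$-page. The zeroth differential $d_0$ is the filtration-preserving part of $d$: the $\C{D}$-cobar differential on each block $\C{D}^{\ot n_k}$, the two mixed splittings of each of the $i$ frozen $\C{C}$-slots, and the $\C{D}$-valued parts of the boundary coactions of $V$ and $W$. For fixed $i$ the column $(E_0^{i,\bullet},d_0)$ is thus an $(i+1)$-fold cobar multicomplex. I would compute its cohomology by running first the $i$ directions $n_1,\dots,n_i$ lying to the right of the initial $\C{D}$-block: each of these neighbours a slot occupied by $\C{C}$, so by coflatness of $\C{C}$ as a right $\C{D}$-comodule the corresponding derived cotensor products carry no higher terms, and this portion collapses, in degree $0$, to the right $\C{D}$-comodule $\C{C}^{\Box_\C{D}\,i}\,\Box_\C{D}\,W$. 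Running the remaining direction $n_0$ against $V$ then computes $\cotor^j_\C{D}(V,\C{C}^{\Box_\C{D}\,i}\,\Box_\C{D}\,W)$ — no further collapse occurs, since $V$ is not assumed coflat — and this is the asserted $E_1^{i,j}$.

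I expect the third step to be the main obstacle, and I would handle it exactly as in the algebra-side argument of \cite[Thm. 3.1]{KaygSutl14}: verify that $d_0$ is genuinely the $\C{D}$-cobar differential for the iterated cotensor product (tracking the bicomodule structure of $\C{C}$ through its mixed splittings and the signs), and then legitimise the stagewise collapse as an iterated-spectral-sequence / acyclic-assembly computation, invoking coflatness of $\C{C}$ over $\C{D}$ one block at a time. Everything else — the boundedness of the filtration in each degree and hence convergence, and the reduction to $\cotor_\C{C}$ via the lemma — is formal.
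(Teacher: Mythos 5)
Your proposal is correct and follows essentially the same route as the paper, which proves the theorem by exactly this construction: the auxiliary coalgebra $\C{Z}=\C{C}\oplus\C{D}$, the lemma identifying $\cotor^\ast_{\C{Z}}(V,W)$ with $\cotor^\ast_{\C{C}}(V,W)$, the decreasing filtration $F^{n+p}_p$ counting $\C{C}$-slots, and the coflatness-driven collapse identifying the $E_1$-page, modeled on \cite[Thm. 3.1]{KaygSutl14}. Your explicit verifications that the differential is filtration-compatible and that convergence follows from boundedness of the filtration in each total degree are left implicit in the paper, so your write-up is, if anything, slightly more complete.
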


\subsection{Cohomology of Profinite Algebras}

In this subsection we recall, from \cite[Sect. 4]{AbraWeib02}, the identification of the cotor groups of a coalgebra with the Hochschild cohomology of its profinite dual. In the finite dimensional case, one has the following.

\begin{theorem}\label{finite-cotor}\cite[Thm. 3.4]{AbraWeib02}
Let $\C{A}$ be a finite dimensional algebra, and $\C{C}:=\C{A}^\ast$ its dual coalgebra. If $M$ is a left $\C{A}$-module, and $N$ a right $\C{A}$-module, then 
\begin{equation*}
\cotor_{\C{C}}^\ast(M,N) = HH^\ast(\C{A},M\ot N).
\end{equation*}
\end{theorem}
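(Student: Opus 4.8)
The plan is to identify, term by term, the Hochschild cochain complex computing $HH^*(\C{A},M\ot N)$ with the cobar complex computing $\cotor_{\C{C}}^*(M,N)$, dualizing every structure map by means of the finite-dimensionality of $\C{A}$.

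First I would fix the comodule conventions dictated by the duality. Since $\C{A}$ is finite dimensional, the functor $(-)^\ast$ is an equivalence between left (resp. right) $\C{A}$-modules and right (resp. left) $\C{C}$-comodules: a left action $\C{A}\ot M\to M$ dualizes to a right coaction $\rho\colon M\to M\ot\C{C}$, and a right action $N\ot\C{A}\to N$ dualizes to a left coaction $\lambda\colon N\to\C{C}\ot N$. Thus $M$ is a right and $N$ a left $\C{C}$-comodule, so that $\cotor_{\C{C}}^*(M,N)$ is defined; this is precisely the opposite-parity situation of the preceding lemma.

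Next I would write both complexes explicitly. On the Hochschild side the cochains are $C^n=\Hom(\C{A}^{\ot n},M\ot N)$, where $M\ot N$ carries the $\C{A}$-bimodule structure acting on $M$ from the left and on $N$ from the right, with the usual alternating differential having the two module actions at the ends and the multiplication of $\C{A}$ in between. On the coalgebra side the cobar complex has terms $\Omega^n=M\ot\C{C}^{\ot n}\ot N$, with differential assembled from $\rho$, the iterated comultiplication of $\C{C}$, and $\lambda$, again as an alternating sum. The key step is the identification $C^n\cong\Omega^n$: since $\C{A}^{\ot n}$ is finite dimensional, the canonical map gives $\Hom(\C{A}^{\ot n},M\ot N)\cong(\C{A}^\ast)^{\ot n}\ot M\ot N=\C{C}^{\ot n}\ot M\ot N$, which I reorder to $M\ot\C{C}^{\ot n}\ot N=\Omega^n$. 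I would then check that under this identification the three kinds of summands in the Hochschild differential dualize exactly onto the three kinds of summands in the cobar differential: the multiplication $\C{A}\ot\C{A}\to\C{A}$ dualizes to the comultiplication of $\C{C}$ inserted at the matching slot, the left action dualizes to $\rho$, and the right action dualizes to $\lambda$, with the signs in agreement. As these isomorphisms commute with the differentials, they induce the asserted isomorphism on cohomology.

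The main obstacle I anticipate is bookkeeping rather than conceptual: ensuring that the handedness of the two coactions and the placement and signs of each dualized term match on the nose, so that no spurious flip or sign spoils the comparison. This is exactly where the finite-dimensionality of $\C{A}$ is indispensable---the canonical map $(\C{A}^\ast)^{\ot n}\ot X\to\Hom(\C{A}^{\ot n},X)$ is an isomorphism only then---and where the opposite-parity hypothesis on $M$ and $N$ furnishes precisely the coactions needed to make both ends of the cobar differential well-defined.
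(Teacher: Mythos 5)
Your proposal is correct, but note that the paper itself offers no proof of this statement: it is quoted verbatim from Abrams--Weibel \cite[Thm.~3.4]{AbraWeib02}, so there is no internal argument to compare against. Your term-by-term dualization is the standard direct proof and it goes through: since $\C{A}$ is finite dimensional, the canonical maps $(\C{A}^{\ot n})^\ast \cong \C{C}^{\ot n}$ and $M\ot\C{C}^{\ot n}\ot N \to \Hom(\C{A}^{\ot n}, M\ot N)$, $m\ot c_1\ot\cdots\ot c_n\ot n' \mapsto \bigl(a_1\ot\cdots\ot a_n \mapsto \langle c_1,a_1\rangle\cdots\langle c_n,a_n\rangle\, m\ot n'\bigr)$, are isomorphisms regardless of the dimensions of $M$ and $N$, and under this identification the three families of terms in the Hochschild differential (outer actions and inner multiplications) become exactly the cobar differential $\rho\ot 1\ot\cdots\ot 1$, $1\ot\cdots\ot\Delta\ot\cdots\ot 1$, $1\ot\cdots\ot 1\ot\lambda$ with matching signs; e.g.\ in degree $0$ one checks $(a\cdot x - x\cdot a)$ corresponds to $(\rho\ot 1 - 1\ot\lambda)(m\ot n')$. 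You also correctly isolate the one place finite-dimensionality of $\C{A}$ is essential, namely that module and comodule categories are then equivalent, so the coactions $\rho$, $\lambda$ exist and the cobar complex $M\ot\C{C}^{\ot\bullet}\ot N$ computes $\cotor_{\C{C}}^\ast(M,N)$ (over a field the cobar resolution is injective, so this is legitimate). For comparison, Abrams and Weibel themselves argue at the level of derived functors --- identifying $M\,\Box_{\C{C}}\,N$ with $HH^0(\C{A},M\ot N)$ and invoking a universal $\delta$-functor argument --- so your chain-level proof is marginally more explicit but proves the same canonical isomorphism; either route would serve the role this theorem plays in the paper, namely feeding into the profinite version (Theorem \ref{thm-AbraWeib}) used in the calculations.
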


For an infinite dimensional analogue, we recall the following terminology. A profinite algebra is the inverse limit (projective limit) of finite dimensional algebras. Therefore, we first note that given a coalgebra $\C{C}$, the convolution algebra $\C{A} := \C{C}^\ast$ is a profinite algebra. 

Let $\C{C}$ be a coalgebra, and $\C{A} = \C{C}^\ast$ the dual (convolution) algebra. Then any left $\C{C}$-comodule $N$ is a right $\C{A}$-module via
\begin{equation*}
n\cdot a := \langle n\ns{-1},a \rangle n\ns{0},
\end{equation*}
and similarly any right $\C{C}$-comodule $M$ is a left $\C{A}$-module via
\begin{equation*}
a\cdot m := m\ns{0}\langle m\ns{1},a \rangle.
\end{equation*}

There is then the following infinite dimensional counterpart of Theorem \ref{finite-cotor}.

\begin{theorem}\cite[Thm. 4.11]{AbraWeib02}\label{thm-AbraWeib}
Let $\C{C}$ be a coalgebra, and $\C{A}:=\C{C}^\ast$ its profinite dual. If $M$ is a right $C$-comodule, and $N$ a left $C$-comodule, then
\begin{equation*}
\cotor_\C{C}^\ast(M,N) = HH^\ast_{\rm cts}(\C{A},M\ot N).
\end{equation*}
\end{theorem}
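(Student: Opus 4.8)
The plan is to reduce the statement to the finite-dimensional identification of Theorem~\ref{finite-cotor} by exploiting two dual facts: every coalgebra, and every comodule over it, is the filtered colimit of its finite-dimensional subobjects, whereas the profinite dual $\C{A} = \C{C}^\ast$ is simultaneously the cofiltered limit of finite-dimensional algebras. The finite-dimensional theorem is then applied level by level, and the conclusion is obtained by passing to the (co)limit on both sides.

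First I would invoke the fundamental theorem of coalgebras to write $\C{C} = \varinjlim_i \C{C}_i$ as the directed union of its finite-dimensional subcoalgebras, and likewise express the right comodule $M$ and the left comodule $N$ as filtered colimits $M = \varinjlim_i M_i$, $N = \varinjlim_i N_i$ of finite-dimensional subcomodules; after passing to a cofinal index set one may assume that $M_i$ and $N_i$ are comodules over $\C{C}_i$. Since the cobar complex $M \ot \C{C}^{\ot \bullet} \ot N$ computing $\cotor_\C{C}^\ast(M,N)$ is assembled entirely out of tensor products, and tensor products commute with filtered colimits, the whole cobar complex is the filtered colimit of the cobar complexes of the finite-dimensional data. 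As filtered colimits are exact on $k$-vector spaces, they commute with cohomology, yielding
\begin{equation*}
\cotor_\C{C}^\ast(M,N) \;\cong\; \varinjlim_i \cotor_{\C{C}_i}^\ast(M_i,N_i).
\end{equation*}

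Dually, I would write $\C{A} = \C{C}^\ast = \varprojlim_i \C{C}_i^\ast = \varprojlim_i \C{A}_i$, exhibiting $\C{A}$ as a profinite algebra with the inverse-limit topology, and observe that the bimodule $V := M \ot N$ is discrete, being the filtered colimit of the finite-dimensional bimodules $V_i := M_i \ot N_i$ on which $\C{A}$ acts through the finite quotient $\C{A}_i$. A continuous Hochschild cochain $\C{A}^{\ot n} \to V$ must then factor through some $\C{A}_i^{\ot n} \to V_i$, so the continuous cochain complex is the filtered colimit of the ordinary Hochschild complexes of the $\C{A}_i$, giving
\begin{equation*}
HH_{\rm cts}^\ast(\C{A}, V) \;\cong\; \varinjlim_i HH^\ast(\C{A}_i, M_i \ot N_i).
\end{equation*}
Applying Theorem~\ref{finite-cotor} to each index $i$ identifies $\cotor_{\C{C}_i}^\ast(M_i,N_i)$ with $HH^\ast(\C{A}_i, M_i \ot N_i)$, and once one checks that these isomorphisms are natural with respect to the inclusions $\C{C}_i \hookrightarrow \C{C}_{i'}$ (equivalently the surjections $\C{A}_{i'} \twoheadrightarrow \C{A}_i$), passing to the colimit yields the desired identification.

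The step I expect to be the main obstacle is the verification underlying the second colimit identity: one must pin down the profinite topology on $\C{A}^{\ot n}$ and the discreteness of $V$ precisely enough to guarantee that continuous $n$-cochains are exactly those factoring through finite data, \ie that $\Hom_{\rm cts}(\C{A}^{\ot n}, V) \cong \C{C}^{\ot n} \ot V$ compatibly with the Hochschild differential. Equivalently, this is the assertion that the continuous dual of $(\C{C}^\ast)^{\ot n}$ recovers $\C{C}^{\ot n}$ and that this linear duality intertwines the cobar differential with the Hochschild differential. Once this duality of complexes is established, the naturality needed to interchange the two filtered colimits in the final step is routine.
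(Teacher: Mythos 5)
This theorem is quoted in the paper from \cite[Thm.~4.11]{AbraWeib02} without proof, so there is no internal argument to compare against; measured against the source, your proposal is a correct but genuinely different route. Abrams and Weibel obtain the isomorphism directly at the cochain level: they topologize $\C{A}=\C{C}^\ast$ as a pseudocompact (linearly compact) algebra and prove the duality $\Hom_{\rm cts}(\C{A}^{\wtilde\ot n}, V)\cong \C{C}^{\ot n}\ot V$ for discrete $V$, checking that it intertwines the Hochschild and cobar differentials, so that the continuous Hochschild complex of $\C{A}$ with coefficients in $M\ot N$ \emph{is} the cobar complex of $\C{C}$ computing $\cotor_\C{C}^\ast(M,N)$. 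Your route instead reduces to the finite-dimensional Theorem~\ref{finite-cotor} by writing $\C{C}=\varinjlim_i \C{C}_i$ (fundamental theorem of coalgebras), indexing by triples $(\C{C}_i,M_i,N_i)$ to get cofinality, and using exactness of filtered colimits of $k$-vector spaces on the cobar side together with the factorization of continuous cochains through finite quotients on the Hochschild side. This is sound: the open subspaces of $\C{A}$ in the weak-$\ast$ topology are the annihilators of finite-dimensional subspaces of $\C{C}$, and subcoalgebras are cofinal among these, so a continuous multilinear cochain into the discrete module $V=\varinjlim_i M_i\ot N_i$ does factor through some $\C{A}_i^{\ot n}\to M_j\ot N_j$, and the transition maps (inflation along $\C{A}_{i'}\twoheadrightarrow \C{A}_i$ against inclusion of coefficients) match the inclusion-induced maps on $\cotor$ under the finite-dimensional isomorphism, which is natural because it is itself a duality of complexes. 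What you correctly flag as the main obstacle --- identifying $\Hom_{\rm cts}(\C{A}^{\ot n},V)$ with $\C{C}^{\ot n}\ot V$ compatibly with differentials --- is in fact the entire content of the source's proof; once it is established, your two colimit identities are immediate consequences and the finite-dimensional reduction becomes optional scaffolding. So the trade-off is: your argument leverages the already-stated Theorem~\ref{finite-cotor} and keeps the topology to a single factorization lemma, at the cost of colimit bookkeeping and a naturality check; the direct duality is shorter but requires setting up the completed tensor product and the pseudocompact topology carefully. No gap, provided you prove the factorization claim for \emph{jointly} continuous multilinear maps (not just linear ones), since that is where the product topology on $\C{A}^{\ot n}$ actually enters.
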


\section{Calculations}

\subsection{Cohomology of the fundamental examples}\label{Calculations} In this subsection we compute the Hochschild cohomology of the incidence algebras of Subsect. \ref{ex1}, Subsect. \ref{ex2}, Subsect. \ref{ex4}, and Subsect. \ref{ex3}, with trivial coefficients. We recall that these incidence algebras are the profinite duals of the coalgebras of Subsect. \ref{DividedPowers}, Subsect. \ref{BinomialCoalgebra}, Subsect. \ref{QuantumBinomialCoalgebra}, and Subsect. \ref{DirichletCoalgebra}, respectively. In the following theorem ${\bf R}(P_{\rm forest},\sim)$ denotes the convolution algebra ${\bf C}(P_{\rm forest},\sim)^\ast$.

\begin{theorem}\label{thm-exp}
Let $\C{A}$ be one of the following reduced incidence algebras: ${\bf R}(B(S),\sim)$, ${\bf R}(\B{Z}^{\geq 0},\sim)$, ${\bf R}(L(V),\sim)$ or ${\bf R}(P_{\rm forest},\sim)$. Then the continuous Hochschild cohomology, with trivial coefficients, of $\C{A}$ is given by
\begin{equation*}
HH_{\rm cts}^n(\C{A},k) = \begin{cases}
k & \text{\rm if } n=0,\\
k & \text{\rm if } n=1,\\
0 & \text{\rm if } n\geq 2.
\end{cases}
\end{equation*}
\end{theorem}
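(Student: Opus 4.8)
The plan is to reduce all four cases to a single $\cotor$-computation and then evaluate it through the cobar complex. By Theorem~\ref{thm-AbraWeib}, taking $M=N=k$ to be the trivial (counital) comodules so that $M\ot N=k$, the cohomology we seek is
\[
HH^n_{\rm cts}(\C{A},k)=\cotor^n_\C{C}(k,k),
\]
where $\C{C}=\C{A}^\ast$ is the corresponding pre-dual incidence coalgebra, namely $\Bin$, $\Div$, $\Bin_q$, or ${\bf C}(P_{\rm forest},\sim)$. By Propositions~\ref{DivBin} and \ref{Fundamental2} these four coalgebras are mutually isomorphic, and by Proposition~\ref{BinPoly} each is isomorphic to the polynomial coalgebra $k[X]$ with $X$ primitive. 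Since $\cotor$ depends only on the isomorphism class of a coalgebra, it suffices to compute $\cotor^\ast_\C{C}(k,k)$ for one convenient model, and I would take $\C{C}=\Div$, with basis $\{x_n\}_{n\geq 0}$, group-like $x_0$, and reduced coproduct $\wbar{\D}(x_n)=\sum_{r=1}^{n-1}x_r\ot x_{n-r}$ on the augmentation coideal $\wbar{\C{C}}=\langle x_n\mid n\geq 1\rangle=\ker\ve$.

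I would then compute $\cotor^\ast_\C{C}(k,k)$ as the cohomology of the normalized cobar complex $\Omega^p=\wbar{\C{C}}^{\ot p}$, whose differential is the alternating sum of the applications of $\wbar{\D}$ in the several tensor slots. The degrees $0$ and $1$ are formal: $\cotor^0=k\,\Box_\C{C}\,k=k$, while $\cotor^1$ is the space of primitives of $\C{C}$, and from $\wbar{\D}(x_1)=0$ together with $\wbar{\D}(x_n)\neq 0$ for $n\geq 2$ this space is exactly $k\cdot x_1$, giving $\cotor^1=k$. The substance of the theorem is the vanishing $\cotor^n=0$ for $n\geq 2$.

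For the vanishing I would use that $\Div$ is graded by weight ($\deg x_n=n$), so the cobar complex splits as a direct sum of finite subcomplexes $\Omega^\bullet_w$, where $\Omega^p_w$ has a basis indexed by the compositions $(n_1,\ldots,n_p)$ of $w$ into $p$ positive parts; the weight-$1$ subcomplex contributes exactly the class $x_1$ found above, and it remains to prove that $\Omega^\bullet_w$ is acyclic for every $w\geq 2$. The conceptual reason is that $\Div$ is the cofree conilpotent cocommutative coalgebra cogenerated by the one-dimensional space $k\cdot x_1$, so by the bar--cobar (Koszul/PBW) duality between the symmetric and exterior constructions in characteristic $0$ its cobar cohomology is the exterior algebra $\Lambda(k\cdot x_1)$, which is $k$ in degrees $0$ and $1$ and vanishes above; this simultaneously yields the graded vector space claimed and the ring structure referred to in the introduction. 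To keep the argument self-contained one may instead exhibit an explicit contracting homotopy on each $\Omega^\bullet_w$, $w\geq 2$ (splitting off the leftmost tensor factor against the deconcatenation differential), whose existence is at least consistent with the vanishing Euler characteristic $\sum_{p}(-1)^p\binom{w-1}{p-1}=-(1-1)^{w-1}=0$.

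The main obstacle is precisely this higher vanishing: everything through degree $1$ is automatic, but proving $\cotor^{\geq 2}_\C{C}(k,k)=0$ requires either invoking the cofreeness of $\Div$, and hence the identification of its cobar cohomology with an exterior algebra, or checking by hand the acyclicity of the weight-graded cobar subcomplexes---the latter being where the genuine combinatorial bookkeeping of signs in the contracting homotopy lives. Once the model $\Div$ is settled, transport of the result to $\Bin$, $\Bin_q$, and ${\bf C}(P_{\rm forest},\sim)$ is immediate from the isomorphisms established in Section~\ref{Isomorphisms}.
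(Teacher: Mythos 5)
Your proposal is correct and takes essentially the same route as the paper: both reduce via Theorem \ref{thm-AbraWeib} to $\cotor^\ast_{\C{C}}(k,k)$ for the pre-dual incidence coalgebra and then use the isomorphisms of Section \ref{Isomorphisms} (Propositions \ref{BinPoly}, \ref{DivBin}, \ref{Fundamental2}) to reduce everything to the polynomial coalgebra $k[X]$. The only divergence is at the final computation, where the paper simply cites \cite[Thm. XVIII.7.1]{Kassel-book} for $\cotor^\ast_{k[X]}(k,k)$, while you open that black box and prove it directly---your weight-graded cobar argument is sound, since the weight-$w$ subcomplex is the tensor product of $w-1$ acyclic two-term complexes (equivalently, the cobar of the cofree conilpotent coalgebra $T^c(k\cdot x_1)$ retracts onto $k\oplus k\cdot x_1$), so the claimed contracting homotopy for $w\geq 2$ does exist.
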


\begin{proof}
Since the reduced incidence algebra ${\bf R}(B(S),\sim)$ is the profinite dual of the reduced incidence coalgebra $\Div$ of Sect. \ref{sect-IncCoalg}, we use \cite[Thm. 4.11]{AbraWeib02} (Theorem \ref{thm-AbraWeib} above) to get
\begin{equation*}
HH_{\rm cts}^\ast({\bf R}(B(S),\sim),k) = HH_{\rm cts}^\ast({\bf R}(B(S),\sim),k\ot k) \cong \cotor_{\Bin}^\ast(k,k) \cong \cotor_{k[X]}^\ast(k,k),
\end{equation*}
where the last isomorphism follows from Proposition \ref{BinPoly}. Then the claim follows from \cite[Thm. XVIII.7.1]{Kassel-book}, and Proposition \ref{DivBin}.
\end{proof}

As for the reduced incidence algebra ${\bf R}(L(\B{Q}),\sim)$ of formal Dirichlet power series we first note the following.

\begin{lemma}
The Dirichlet coalgebra is isomorphic with the polynomial coalgebra $k[\{X_n\mid n\in \B{N}\}]$ of countable indeterminates.
\end{lemma}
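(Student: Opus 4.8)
The plan is to leverage the coalgebra isomorphism $\eta\colon \Dir \lra k[\{X_p \mid p\ \text{prime}\}]$ already exhibited at the end of Section~\ref{Isomorphisms}, and then to absorb the distinction between an indexing by primes and an indexing by $\B{N}$ into a mere relabeling of (primitive) generators. The conceptual content of $\eta$ is that the Dirichlet coproduct factors over primes; the only thing separating $k[\{X_p \mid p\ \text{prime}\}]$ from $k[\{X_n \mid n \in \B{N}\}]$ is that the set of primes is countably infinite.

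First I would make the structural observation underlying $\eta$ explicit. By the fundamental theorem of arithmetic, every $n \in \B{N}$ factors uniquely as $n = \prod_p p^{e_p}$ with all but finitely many $e_p$ equal to $0$, and a factorization $ij = n$ is precisely the datum of a splitting $e_p = a_p + b_p$ with $a_p, b_p \geq 0$ for each prime $p$. Consequently the coproduct $\D(z_n) = \sum_{ij=n} z_i \ot z_j$ factors prime by prime, and the assignment sending $z_{p^e}$ to the degree-$e$ generator of a $p$-indexed copy of $\Div$ identifies $\Dir$ with the tensor product $\bigotimes_{p} \Div$ of divided-power coalgebras, one factor per prime.

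Next I would apply the chain $\Div \cong \Bin \cong k[X]$ from Propositions~\ref{DivBin} and~\ref{BinPoly} in each tensor factor (the factorial rescaling in $\gamma$ converting the divided-power structure into the polynomial one), so that $\bigotimes_p \Div \cong \bigotimes_p k[X_p] = k[\{X_p \mid p\ \text{prime}\}]$ as coalgebras, recovering $\eta$. Finally, since the set of primes is countably infinite, any bijection with $\B{N}$ induces, by relabeling the primitive generators, a coalgebra isomorphism $k[\{X_p \mid p\ \text{prime}\}] \cong k[\{X_n \mid n \in \B{N}\}]$; composing with $\eta$ yields the claim.

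The one point demanding care is the infinite tensor product, which must be taken as the colimit of its finite subproducts, equivalently the span of tensors in which all but finitely many factors equal the group-like unit $x_0$ corresponding to $z_1$. This is legitimate precisely because each $z_n$ has finite support among the primes. With that convention fixed, verifying that $\eta$ respects comultiplication and counit is the routine prime-by-prime check indicated above, so I expect no genuine obstacle beyond bookkeeping.
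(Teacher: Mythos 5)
Your proof is correct and follows essentially the same route as the paper: the paper's own proof consists exactly of your final relabeling step, sending $X_p \mapsto X_{o(p)}$ where $o(p)$ is the position of the prime $p$ in the natural ordering of primes, applied on top of the isomorphism $\eta\colon \Dir \to k[\{X_p \mid p\ \text{prime}\}]$ already displayed at the end of Section~3. Your reconstruction of $\eta$ via the restricted tensor product $\Dir \cong \bigotimes_p \Div \cong \bigotimes_p k[X_p]$ is in fact more careful than the paper, since your factorial rescaling $z_{\prod_p p^{e_p}} \mapsto \prod_p X_p^{e_p}/e_p!$ supplies the divided-power normalization on prime powers that the paper's stated formula for $\eta$ glosses over (the unnormalized assignment $z_{p^2}\mapsto X_p^2$ would not intertwine $\D(z_{p^2}) = z_1\ot z_{p^2} + z_p\ot z_p + z_{p^2}\ot z_1$ with $\D(X_p^2)$, which carries a binomial coefficient $2$ on the middle term).
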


\begin{proof}
  For every prime $p$, let $o(p)$ be the place of the prime $p$ among
  all primes sorted in their natural order in $\B{N}$.  For instance,
  $o(2)=1$, $o(3)=2$, $o(17)=7$, etc. Then the claimed isomorphism is given by 
  $X_p\mapsto X_{o(p)}$.
\end{proof}

We now are going to calculate the Hochschild cohomology of the algebra of formal Dirichlet series, with trivial coefficients. Given any $m\in \B{N}$, we define the subcoalgebra $\Dir_{(m)} := k[\{X_n\mid n\leq m\}]$. We thus have the coextensions of the form
\begin{align}\label{CoExtension}
& \pi_m\colon \Dir_{(m+1)}\to \Dir_{(m)}, \qquad \pi_n(X_1^{a_1}\cdots X_{n+1}^{a_{n+1}}) = 
  \begin{cases}
    X_1^{a_1}\cdots X_n^{a_n} & \text{ if } a_{n+1}=0,\\
    0 & \text{ otherwise.}
  \end{cases}
\end{align}

We remark that the natural embedding $\Dir_{(m)}\to \Dir_{(m+1)}$ splits \eqref{CoExtension}. The following result then follows immediately.

\begin{lemma}\label{lemma-coext}
For every $m\geq 1$, the coalgebra $\Dir_{(m+1)}$ is isomorphic with $\Dir_{(m)}\otimes k[X_{m+1}]$
(resp. $k[X_{m+1}]\otimes \Dir_{(m)}$) as left (resp. right) $\Dir_{(m)}$-comodules, whose left (resp. right) 
coaction is given by the comultiplication on $\Dir_{(m)}$. In particular, the coextension \eqref{CoExtension} is coflat for any $m\geq 1$.
\end{lemma}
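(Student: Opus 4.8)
The plan is to verify the claimed comodule isomorphism explicitly and then invoke standard coflatness facts. Recall from the lemma's hypotheses that $\Dir_{(m)} = k[\{X_n \mid n \leq m\}]$ is the polynomial coalgebra, and $\Dir_{(m+1)} = \Dir_{(m)} \otimes k[X_{m+1}]$ as algebras; the content of the statement is that this tensor-factorization is compatible with the relevant one-sided comodule structures. The coextension $\pi_m$ of~\eqref{CoExtension} kills all monomials involving $X_{m+1}$ and is the identity on $\Dir_{(m)}$, so I must first describe, for each of the two coalgebras, exactly what the induced $\Dir_{(m)}$-coaction on $\Dir_{(m+1)}$ is.

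First I would unpack the left $\Dir_{(m)}$-comodule structure on $\Dir_{(m+1)}$ induced by the coextension $\pi_m$. As for any coextension, the left coaction is
\begin{equation*}
\rho = (\pi_m \otimes \Id) \circ \Delta \colon \Dir_{(m+1)} \lra \Dir_{(m)} \otimes \Dir_{(m+1)}.
\end{equation*}
Writing a basis monomial of $\Dir_{(m+1)}$ as $X_1^{a_1}\cdots X_m^{a_m} X_{m+1}^{b}$ and using the fact that the comultiplication in the polynomial coalgebra is multiplicative with $\Delta(X_n) = \sum_{i} \binom{?}{?} X_n^{i}\otimes X_n^{?}$ (more precisely the binomial coproduct carried by each indeterminate), I would compute $\rho$ and observe that $\pi_m$ annihilates every term in which the first tensor leg still contains a positive power of $X_{m+1}$. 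The upshot I expect is that the first leg retains only the $X_1,\dots,X_m$ part, so that $\rho$ factors as the comultiplication of $\Dir_{(m)}$ applied to the $\Dir_{(m)}$-part while $X_{m+1}^b$ is carried along passively in the second leg. This is precisely the statement that $\Dir_{(m+1)} \cong \Dir_{(m)} \otimes k[X_{m+1}]$ as a left $\Dir_{(m)}$-comodule with coaction $\Delta_{\Dir_{(m)}} \otimes \Id_{k[X_{m+1}]}$. The right-comodule case is symmetric, using $(\Id \otimes \pi_m)\circ\Delta$ and the factorization $k[X_{m+1}] \otimes \Dir_{(m)}$.

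For the coflatness conclusion, I would then argue that a comodule of the form $\Dir_{(m)} \otimes W$, with coaction $\Delta_{\Dir_{(m)}} \otimes \Id_W$, is automatically coflat over $\Dir_{(m)}$: it is a cofree (hence injective, hence coflat) $\Dir_{(m)}$-comodule, being a direct sum of copies of $\Dir_{(m)}$ indexed by a basis of $W = k[X_{m+1}]$. Since the cotensor functor $\Dir_{(m)} \,\Box_{\Dir_{(m)}}\, (-)$ is exact against a cofree comodule, we get $\cotor^n_{\Dir_{(m)}}(V, \Dir_{(m+1)}) = 0$ for $n \geq 1$ and every right comodule $V$, and symmetrically on the other side, which is exactly coflatness of the coextension~\eqref{CoExtension}. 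I would also note that the splitting remarked before the lemma — the algebra embedding $\Dir_{(m)} \hookrightarrow \Dir_{(m+1)}$ being a coalgebra section of $\pi_m$ — makes the coextension a split (cosemisimple-type) one, reinforcing the coflatness; but the cofreeness argument already suffices.

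\textbf{The main obstacle} I anticipate is purely bookkeeping: pinning down the induced coaction $(\pi_m \otimes \Id)\circ\Delta$ carefully enough to see the clean factorization, since the binomial coproducts on the indeterminates produce cross-terms that one must check are all killed by $\pi_m$ except the desired ones. Once that computation is in hand, identifying $\Dir_{(m)} \otimes k[X_{m+1}]$ as cofree and concluding coflatness is routine. No genuinely hard estimate or construction is required; the difficulty is entirely in writing the Sweedler-notation computation transparently.
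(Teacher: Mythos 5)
Your proposal is correct and matches the paper's intent: the paper simply remarks that the embedding $\Dir_{(m)}\hookrightarrow\Dir_{(m+1)}$ splits the coextension and declares the lemma to follow immediately, and your explicit computation of $(\pi_m\otimes\Id)\circ\Delta$ on monomials $X_1^{a_1}\cdots X_m^{a_m}X_{m+1}^{b}$, followed by the cofree $\Rightarrow$ injective $\Rightarrow$ coflat chain, is exactly the detail that assertion suppresses. In fact your write-up is more complete than the paper's, which offers no computation at all.
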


In the construction of the Hochschild-groups of the (reduced) incidence algebra ${\bf R}(L(\B{Q}),\sim)$, we will use the the graded algebra structure of the $\cotor$-groups of the (reduced) incidence coalgebra ${\bf C}(L(\B{Q}),\sim)$. We thus record the following result.

\begin{lemma}
Let $C$ be a coalgebra with a group-like
  element $1 \in C$.  Considering $k$ as a left and right $C$-comodule via this element, $\cotor^*_{C}(k,k)$ is a
  graded algebra.
\end{lemma}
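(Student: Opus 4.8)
The plan is to realize $\cotor^*_C(k,k)$ as the cohomology of an explicit differential graded algebra, and then to invoke the elementary fact that the cohomology of a differential graded algebra is a graded algebra. The natural model is the normalized \emph{cobar complex}. Since $1\in C$ is group-like we have $\D(1)=1\ot 1$ and $\ve(1)=1$, so the coaugmentation $k\to C$, $1\mapsto 1$, splits the counit and produces the decomposition $C=k1\oplus\wbar{C}$ with $\wbar{C}=\Ker\ve$. First I would record the reduced coproduct
\begin{equation*}
\wbar{\D}\colon \wbar{C}\lra \wbar{C}\ot\wbar{C},\qquad \wbar{\D}(c)=\D(c)-c\ot 1-1\ot c,
\end{equation*}
verifying by means of $\ve\ot\mathrm{id}$ and $\mathrm{id}\ot\ve$ that $\wbar{\D}$ indeed takes values in $\wbar{C}\ot\wbar{C}$, and that it is coassociative because $\D$ is. The normalized cobar complex is then $\Om^0=k$ and $\Om^n=\wbar{C}^{\ot n}$ for $n\geq 1$, with differential
\begin{equation*}
d(c_1\ot\cdots\ot c_n)=\sum_{i=1}^n(-1)^{i}\,c_1\ot\cdots\ot\wbar{\D}(c_i)\ot\cdots\ot c_n,
\end{equation*}
for which coassociativity of $\wbar{\D}$ yields $d^2=0$. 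Using the trivial comodule structure on $k$ afforded by the group-like $1$, it is standard, via the references recalled above for $\cotor$-groups and the cobar resolution, that $H^*(\Om^\bullet,d)\cong\cotor^*_C(k,k)$.

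Next I would equip $\Om^\bullet$ with the associative concatenation product $(c_1\ot\cdots\ot c_p)\cdot(c'_1\ot\cdots\ot c'_q)=c_1\ot\cdots\ot c_p\ot c'_1\ot\cdots\ot c'_q$, which respects the grading, $\Om^p\cdot\Om^q\subseteq\Om^{p+q}$, and has the generator of $\Om^0=k$ as two-sided unit. The crucial step is to check that $d$ is a graded derivation, \ie $d(ab)=d(a)\,b+(-1)^{|a|}a\,d(b)$ for homogeneous $a\in\Om^p$ and $b\in\Om^q$. This holds because $d$ is a sum of local operations, each inserting $\wbar{\D}$ at a single tensor slot: the insertions at the $p$ slots of the $a$-factor assemble precisely into $d(a)\cdot b$, while the insertions at the slots of the $b$-factor assemble into $a\cdot d(b)$, the shift of the slot index by $p$ turning the sign $(-1)^{i}$ into $(-1)^{p}(-1)^{j}=(-1)^{|a|}(-1)^{j}$ and thereby producing the expected Koszul sign. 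I expect this sign bookkeeping to be the main, and essentially the only, obstacle, since it is the one point where the convention in the cobar differential must be matched against the concatenation product; the rest is formal.

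Finally, once $(\Om^\bullet,d,\cdot)$ is established as a differential graded algebra, the conclusion is immediate. By the Leibniz rule the product of two cocycles is a cocycle, and the product of a cocycle with a coboundary is again a coboundary; hence concatenation descends to a well-defined associative product
\begin{equation*}
\cotor^p_C(k,k)\ot\cotor^q_C(k,k)\lra\cotor^{p+q}_C(k,k),
\end{equation*}
with unit the class of $1\in\Om^0$. This exhibits $\cotor^*_C(k,k)=H^*(\Om^\bullet,d)$ as a graded algebra, as claimed.
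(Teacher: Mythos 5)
Your proposal is correct and takes essentially the same approach as the paper: both realize $\cotor^*_C(k,k)$ as the cohomology of a cobar-type complex on which concatenation of tensors defines a product satisfying the Leibniz rule $d(\Psi\ot\Phi)=d(\Psi)\ot\Phi+(-1)^{p}\Psi\ot d(\Phi)$, so the product descends to cohomology. The only cosmetic difference is that you normalize first, working with $\wbar{C}^{\ot n}$ and the reduced coproduct, whereas the paper argues directly on the unnormalized complex $\CB^\ast(k,C,k)$, where the two coface terms at the junction of $\Psi$ and $\Phi$ cancel to yield the same derivation identity.
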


\begin{proof}
Following the notation of \cite{KaygSutl14}, let $\Psi\in \CB^p(k,C,k)$, and $\Phi\in \CB^q(k,C,k)$.  The
  straight tensor product of these elements $\Psi\otimes\Phi \in \CB^{p+q}(k,C,k)$, and the
  differential interacts as
  \begin{equation*}
    d^{p+q}(\Psi\otimes\Phi) 
    = d^p(\Psi)\otimes\Phi + (-1)^{p} \Psi\otimes d^q(\Phi).  
  \end{equation*}
  The result follows.
\end{proof}

Dually, there is a graded algebra structure on the Hochschild groups of an augmented algebra given by the Gerstenhaber (cup) product, see \cite{Gers63},
\begin{equation*}
f\cup g (a_1,\ldots, a_{p+q}) := f(a_1,\ldots, a_p)g(a_{p+1},\ldots, a_{p+q}).
\end{equation*} 

We can now state, and prove, the following infinite dimensional analogue of Theorem \ref{thm-exp}.

\begin{theorem}\label{Fundamental3}
The Hochschild cohomology, with trivial coefficients, of the reduced incidence algebra ${\bf R}(L(\B{Q}),\sim)$ of formal Dirichlet series is a graded commutative algebra with countable generators.
\end{theorem}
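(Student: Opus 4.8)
The plan is to move the whole computation to the coalgebra side and then assemble the answer one prime at a time. By Theorem~\ref{thm-AbraWeib} the continuous Hochschild cohomology $HH^*_{\rm cts}({\bf R}(L(\B{Q}),\sim),k)$ is isomorphic to $\cotor^*_\Dir(k,k)$, and under this isomorphism the cup product corresponds to the graded algebra structure on $\cotor^*_\Dir(k,k)$ recorded above; so it suffices to exhibit $\cotor^*_\Dir(k,k)$ as a graded commutative algebra on countably many generators. I would exploit the identification $\Dir\cong k[\{X_n\mid n\in\B{N}\}]$ together with the exhaustion $\Dir=\varinjlim_m \Dir_{(m)}$ by the subcoalgebras $\Dir_{(m)}=k[\{X_n\mid n\leq m\}]$, computing $\cotor^*_{\Dir_{(m)}}(k,k)$ inductively and then passing to the colimit.

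For the inductive step I would feed the coextension $\pi_m\colon\Dir_{(m+1)}\to\Dir_{(m)}$, coflat by Lemma~\ref{lemma-coext}, into Theorem~\ref{thm-spec-seq} with $V=W=k$, yielding a spectral sequence
\[
E_1^{i,j}=\cotor^j_{\Dir_{(m)}}\big(k,\Dir_{(m+1)}^{\,\Box_{\Dir_{(m)}}i}\,\Box_{\Dir_{(m)}}\,k\big)\Longrightarrow \cotor^{i+j}_{\Dir_{(m+1)}}(k,k).
\]
The first task is to use the cofree description $\Dir_{(m+1)}\cong\Dir_{(m)}\otimes k[X_{m+1}]$ of Lemma~\ref{lemma-coext} to collapse the cotensor powers, identifying $\Dir_{(m+1)}^{\,\Box_{\Dir_{(m)}}i}\,\Box_{\Dir_{(m)}}\,k\cong k[X_{m+1}]^{\otimes i}$ with trivial $\Dir_{(m)}$-coaction; this gives $E_1^{i,j}\cong\cotor^j_{\Dir_{(m)}}(k,k)\otimes k[X_{m+1}]^{\otimes i}$ with $d_1$ the cobar differential of $k[X_{m+1}]$, so that
\[
E_2^{i,j}\cong\cotor^j_{\Dir_{(m)}}(k,k)\otimes\cotor^i_{k[X_{m+1}]}(k,k).
\]

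By the single-variable computation underlying Theorem~\ref{thm-exp} (via Proposition~\ref{BinPoly} and \cite[Thm. XVIII.7.1]{Kassel-book}), $\cotor^*_{k[X_{m+1}]}(k,k)$ is the exterior algebra $\Lambda(\xi_{m+1})$ on one degree-one generator. Hence $E_2$ is concentrated in the columns $i=0,1$, all higher differentials vanish for degree reasons, and the sequence degenerates. Since the filtration is multiplicative and the embedding $\Dir_{(m)}\hookrightarrow\Dir_{(m+1)}$ splits $\pi_m$, the class $\xi_{m+1}$ lifts to $\cotor^1_{\Dir_{(m+1)}}(k,k)$ and multiplication gives an algebra isomorphism $\cotor^*_{\Dir_{(m)}}(k,k)\otimes\Lambda(\xi_{m+1})\cong\cotor^*_{\Dir_{(m+1)}}(k,k)$. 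Starting from $\cotor^*_{\Dir_{(1)}}(k,k)=\Lambda(\xi_1)$ and iterating yields $\cotor^*_{\Dir_{(m)}}(k,k)\cong\Lambda(\xi_1,\dots,\xi_m)$.

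Finally, since the cobar complex and its homology commute with filtered colimits, passing to the limit over $\Dir_{(m)}\hookrightarrow\Dir_{(m+1)}$ gives
\[
\cotor^*_\Dir(k,k)\cong\varinjlim_m\Lambda(\xi_1,\dots,\xi_m)=\Lambda(\xi_1,\xi_2,\dots),
\]
the exterior algebra on countably many degree-one generators, one per prime, which is graded commutative with countable generators; combined with Theorem~\ref{thm-AbraWeib} this proves the statement. The step I expect to be the main obstacle is the first one: verifying that the iterated cotensor powers collapse to $k[X_{m+1}]^{\otimes i}$ with trivial coaction and that the induced $d_1$ is precisely the $k[X_{m+1}]$-cobar differential compatibly with the ring structure. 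Once this Künneth-type identification is in place, the degeneration (two columns) and the colimit passage are formal.
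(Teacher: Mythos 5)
Your proposal is correct and follows essentially the same route as the paper's own proof: reduce via Theorem~\ref{thm-AbraWeib} to $\cotor^\ast_{\Dir}(k,k)$ as a graded algebra, feed the coflat coextensions $\pi_m\colon\Dir_{(m+1)}\to\Dir_{(m)}$ of Lemma~\ref{lemma-coext} into Theorem~\ref{thm-spec-seq}, and collapse the cotensor powers to obtain $E_2^{p,q}\cong\cotor^q_{\Dir_{(m)}}(k,k)\otimes\cotor^p_{k[X_{m+1}]}(k,k)$. If anything, you supply details the paper leaves implicit --- the two-column degeneration at $E_2$, the multiplicative lifting via the splitting of $\pi_m$, and the passage to the colimit over $m$ --- so your write-up is a more complete version of the same argument.
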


\begin{proof}
In view of Lemma \ref{lemma-coext} and Theorem \ref{thm-spec-seq}, for each $m\geq 1$ we have a spectral sequence
  \begin{equation*}
    E^{p,q}_1 = \cotor^q_{\Dir_{(m)}}(k, {\Dir_{(m+1)}}^{\Box_{\Dir_{(m)}}\,p}\,\Box_{\Dir_{(m)}}\,k) \Rightarrow \cotor^{p+q}_{\Dir_{(m+1)}}(k,k).
  \end{equation*}
Since
  \begin{equation*}
\Dir_{(m+1)}^{\Box_{\Dir_{(m)}}\,p}\,\Box_{\Dir_{(m)}}\,k \cong k[X_{m+1}]^{\ot\,p} \ot k
  \end{equation*}
  as left $\Dir_{(m)}$-comodules, 
  \begin{equation*}
E_1^{p,q} \cong \cotor^q_{\Dir_{(m)}}(k,k[X_{m+1}]^{\ot\,p} \ot k) = \cotor^q_{\Dir_{(m)}}(k,k)\ot k[X_{m+1}]^{\ot\,p} \ot k
  \end{equation*}  
As a result,
  \begin{equation*}
E_2^{p,q} \cong \cotor^q_{\Dir_{(m)}}(k,k)\ot \cotor^p_{k[X_{m+1}]}(k,k),
  \end{equation*}  
where the latter is the graded algebra generated by $[X_0]:=[1] \in E_2^{0,q}$, and $[X_{m+1}]\in E_2^{1,q}$. The claim now follows from the observation that the identification $\cotor^\ast_{{\bf C}(L(\B{Q}),\sim)}(k,k) \cong HH^\ast({\bf R}(L(\B{Q}),\sim),k)$ by Theorem \ref{thm-AbraWeib} is an isomorphism of differential graded algebras.
\end{proof}

We can immediately conclude the cohomology of commutative graph incidence algebras.

\begin{corollary}
Let $\C{G}$ be the set of all (finite) simple graphs, and $\C{P}(\C{G})$ the family of finite products of intervals of the lattice of subsets of the graphs in $\C{G}$, ordered by inclusion. Let also $\sim$ be the relation given by \eqref{sim}. Then the Hochschild cohomology $HH^\ast({\bf I}(\C{P}(\C{G}),\sim),k)$ of the commutative (reduced) graph incidence algebra ${\bf I}(\C{P}(\C{G}),\sim)$ is a graded commutative algebra with $|\lambda(\widetilde{\C{G}_0})|$-many generators.
\end{corollary}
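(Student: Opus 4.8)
The plan is to recognize that this corollary is the graph-theoretic incarnation of Theorem \ref{Fundamental3} and to run that argument essentially verbatim, after replacing the polynomial generators indexed by $\B{N}$ with those indexed by $\lambda(\widetilde{\C{G}_0})$. The essential structural input is already recorded in Subsection \ref{graph-coalg-comm}: by \cite[Thm. 10.2]{Schm94} there is a coalgebra isomorphism ${\bf C}(\C{P}(\C{G}),\sim) \cong k[\lambda(\widetilde{\C{G}_0})]$, and the right-hand side is a polynomial coalgebra whose primitive generators form the set $\lambda(\widetilde{\C{G}_0})$. This is exactly the form $\Dir \cong k[\{X_n\mid n\in\B{N}\}]$ that drove the Dirichlet computation, the only difference being the indexing set of the polynomial generators.

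First I would transport the problem to the $\cotor$-side. By Theorem \ref{thm-AbraWeib}, and by the same reasoning as at the end of the proof of Theorem \ref{Fundamental3}, the identification
\[
HH^\ast({\bf I}(\C{P}(\C{G}),\sim),k)\cong \cotor^\ast_{{\bf C}(\C{P}(\C{G}),\sim)}(k,k)
\]
is an isomorphism of differential graded algebras, so that the cup product on the left corresponds to the product on the $\cotor$-groups. It therefore suffices to compute $\cotor^\ast_{k[\lambda(\widetilde{\C{G}_0})]}(k,k)$ as a graded algebra.

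Next I would filter $k[\lambda(\widetilde{\C{G}_0})]$ by the subcoalgebras $k[S]$ generated by finite subsets $S\subseteq \lambda(\widetilde{\C{G}_0})$, in direct analogy with the subcoalgebras $\Dir_{(m)}$. Each one-step enlargement $k[S\cup\{X\}]\to k[S]$ is split by the evident embedding and hence coflat, exactly as in Lemma \ref{lemma-coext}. Feeding this into the spectral sequence of Theorem \ref{thm-spec-seq}, the isomorphism $k[S\cup\{X\}]^{\Box_{k[S]}\,p}\,\Box_{k[S]}\,k \cong k[X]^{\ot p}\ot k$ of left $k[S]$-comodules yields
\[
E_2^{p,q}\cong \cotor^q_{k[S]}(k,k)\ot \cotor^p_{k[X]}(k,k),
\]
so that passing from $k[S]$ to $k[S\cup\{X\}]$ adjoins exactly one new generator, the class of $X$. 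Assembling these stages over all finite $S$ then exhibits $\cotor^\ast_{k[\lambda(\widetilde{\C{G}_0})]}(k,k)$ as a graded commutative algebra with one generator per primitive, that is, on $|\lambda(\widetilde{\C{G}_0})|$ generators.

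The only step requiring more than a bare invocation of Theorem \ref{Fundamental3} is the passage to the (infinite) colimit, and this is where I expect the genuine, if mild, obstacle to lie. I would argue that $\cotor$ commutes with the filtered colimit $k[\lambda(\widetilde{\C{G}_0})] = \varinjlim_S k[S]$ because every cochain in the cobar complex $\CB^\ast(k,k[\lambda(\widetilde{\C{G}_0})],k)$ involves only finitely many of the primitive generators, and hence factors through some finite stage $k[S]$. This guarantees that the per-stage generators assemble into a generating set of cardinality exactly $|\lambda(\widetilde{\C{G}_0})|$, which is moreover countable since there are only countably many isomorphism classes of finite simple graphs. Reconciling this colimit with the cardinality count is the crux; everything else is a mechanical repetition of the Dirichlet argument.
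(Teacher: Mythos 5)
Your proposal is correct and follows essentially the same route as the paper: the paper deduces the corollary immediately from Theorem~\ref{Fundamental3}, using the isomorphism ${\bf C}(\C{P}(\C{G}),\sim)\cong k[\lambda(\widetilde{\C{G}_0})]$ of Subsection~\ref{graph-coalg-comm} to view the graph incidence coalgebra as a polynomial coalgebra and rerun the filtration/spectral-sequence argument with the new indexing set of primitives. Your explicit treatment of the filtered colimit via finiteness of cobar cochains is a detail the paper leaves implicit, but it is the same argument.
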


\subsection{Cohomology via ${\rm Ext}$-groups}\label{Suspension}
Following \cite{Cibi89}, we let $\wbar{Q}$ be the extension of the quiver $Q$ adding two vertices $a$ and $b$, a new arrow from $b$ to each source of $Q$, and a new arrow from each sink of $Q$ to $a$. Let also $\wbar{I}$ be the parallel ideal of $k\wbar{Q}$, and $\wbar{\C{A}}:=k\wbar{Q}/\wbar{I}$ the incidence algebra associated to the quiver $\wbar{Q}$. 

Let $\C{A}^+\subseteq \wbar{\C{A}}$ be the subalgebra
\begin{equation*}
\C{A}^+ := \Big\langle \{\wbar{e_a},\wbar{e_b},\wbar{p}\mid p\in kQ\} \Big\rangle.
\end{equation*}

For each vertex $x\in \wbar{Q}$, let ${}_xk$ be the one dimensional left $\wbar{\C{A}}$-module given by the identity action of $\wbar{e_x}\in \wbar{\C{A}}$, and the trivial action of any other element of $\wbar{\C{A}}$. We similarly define the right $\wbar{\C{A}}$-module $k_x$. 

Let ${\C{C}^+}:=({\C{A}^+})^\ast$, and $\wbar{\C{C}}:=(\wbar{\C{A}})^\ast$ be the dual coalgebras. Dualizing the (flat) extension ${\C{A}^+}\to \wbar{\C{A}}$  we arrive at the coflat coextension $\wbar{\C{C}} \to {\C{C}^+}$ of finite dimensional coalgebras. 

We recall from \cite[Prop. 2.1]{Cibi89} and \cite[Coroll. 1.4]{Cibi89} (which is by \cite{GersScha83}) that the Hochschild cohomology groups of an incidence algebra can be related to the ${\rm Ext}$-groups. We provide in this subsection an alternative proof to this result. 

\begin{theorem}
Let $Q$ be an ordered quiver, and $\C{A}:=kQ/I$ the incidence algebra associated to it. Let $\wbar{\C{A}}:=k\wbar{Q}/\wbar{I}$ be the incidence algebra associated to the quiver $\wbar{Q}$ adjoining $Q$ two new vertices $a$ and $b$. Then,
\begin{equation*}
HH^n(\C{A},\C{A}) = {\rm Ext}_{\wbar{\C{A}}}^{n+2}(k_a,{}_bk), \qquad n\geq 1.
\end{equation*}
\end{theorem}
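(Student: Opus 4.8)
The plan is to dualize everything to incidence coalgebras and to run the coextension spectral sequence of Theorem~\ref{thm-spec-seq} along $\wbar{\C{C}}\to \C{C}^+$, so that the two adjoined vertices $a$ and $b$ produce the degree shift by $2$. Concretely, first I would rewrite the right-hand side as a $\cotor$-group. As $\wbar{\C{A}}$ is finite dimensional, the one-dimensional bimodule ${}_bk\ot k_a$ (left action through $\wbar{e_b}$, right action through $\wbar{e_a}$) has $HH^\ast(\wbar{\C{A}},{}_bk\ot k_a)$ identified with $\ext^\ast_{\wbar{\C{A}}}(k_a,{}_bk)$ by the standard adjunction relating Hochschild cohomology with one-dimensional bimodule coefficients to $\ext$ between the corresponding simple modules, while Theorem~\ref{finite-cotor} identifies $HH^\ast(\wbar{\C{A}},{}_bk\ot k_a)$ with $\cotor^\ast_{\wbar{\C{C}}}(k_a,k_b)$, the $\cotor$ of the simple (group-like) comodules sitting at the two adjoined vertices of $\wbar{\C{C}}=(\wbar{\C{A}})^\ast$, the incidence coalgebra of the bounded poset $\wbar{Q}_0$. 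Thus it suffices to prove
\[
\cotor^{n+2}_{\wbar{\C{C}}}(k_a,k_b)\cong HH^n(\C{A},\C{A}),\qquad n\geq 1.
\]

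Next I would feed the coflat coextension $\wbar{\C{C}}\to \C{C}^+$ into Theorem~\ref{thm-spec-seq} with $V=k_a$ and $W=k_b$, obtaining
\[
E_1^{i,j}=\cotor^j_{\C{C}^+}\!\big(k_a,\ \wbar{\C{C}}^{\,\Box_{\C{C}^+} i}\,\Box_{\C{C}^+}\,k_b\big)\ \Longrightarrow\ \cotor^{i+j}_{\wbar{\C{C}}}(k_a,k_b).
\]
The decisive point is that $\C{C}^+=\C{C}\oplus k\,\wbar{e_a}\oplus k\,\wbar{e_b}$ as a coalgebra, with $a$ and $b$ \emph{isolated} group-like elements (no incidences inside $\C{A}^+$ involve $a$ or $b$). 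Hence $k_a$ is a direct summand of the regular comodule $\C{C}^+$, so it is injective and $\cotor^j_{\C{C}^+}(k_a,-)=0$ for $j\geq 1$. The spectral sequence therefore collapses onto the row $j=0$, and $\cotor^i_{\wbar{\C{C}}}(k_a,k_b)$ is computed by the relative cobar complex $i\mapsto k_a\,\Box_{\C{C}^+}\,\wbar{\C{C}}^{\,\Box_{\C{C}^+} i}\,\Box_{\C{C}^+}\,k_b$ with its induced $d_1$.

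Finally I would identify this complex. Because $a$ and $b$ are the minimum and maximum of $\wbar{Q}_0$, every interval having $a$ (resp.\ $b$) as an endpoint is killed by $\pi$, so $k_a\,\Box_{\C{C}^+}\,\wbar{\C{C}}$ is spanned by all intervals $[a,y]$ and $\wbar{\C{C}}\,\Box_{\C{C}^+}\,k_b$ by all intervals $[x,b]$, while the interior cotensorings over $\C{C}^+$ concatenate intervals at the common vertices of $Q_0$. After passing to the normalized complex, the degree-$i$ part is thus governed by the strict chains $a<x_1<\cdots<x_{i-1}<b$, that is, by the strict chains $x_1<\cdots<x_{i-1}$ of $Q_0$ together with the two forced endpoints. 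Stripping $a$ and $b$ produces exactly the normalized Hochschild cochain complex of $\C{A}$ with coefficients in $\C{A}$ (whose degree-$n$ part is controlled by the strict chains $x_0<\cdots<x_n$ of $Q_0$, via the one-dimensional corners $e_{x_0}\C{A}\,e_{x_n}$), shifted by $i=n+2$; matching differentials then gives the displayed isomorphism, and hence the theorem.

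The step I expect to be the main obstacle is this last identification. One must verify not merely that the graded pieces agree after the degree-$2$ shift, but that the $d_1$ differential produced by the interior cotensor product over $\C{C}^+$ coincides, signs included, with the normalized Hochschild coboundary of $\C{A}$, and that the degenerate (identity-interval) terms form an acyclic subcomplex on both sides so that normalization is legitimate. A secondary point demanding care is the left/right comodule bookkeeping in the opening reduction, ensuring that $\ext^\ast_{\wbar{\C{A}}}(k_a,{}_bk)$ really corresponds to $\cotor^\ast_{\wbar{\C{C}}}(k_a,k_b)$ with $a$ and $b$ placed in the correct slots, and checking that the asserted isomorphism indeed holds in the claimed range $n\geq 1$.
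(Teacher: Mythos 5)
Your opening reduction and your spectral sequence step coincide with the paper's own proof: you pass from ${\rm Ext}_{\wbar{\C{A}}}^{n+2}(k_a,{}_bk)$ to $HH^{n+2}(\wbar{\C{A}},{k_a}^\ast\ot{}_bk)$ (the paper does this via \cite[Coroll. IX.4.4]{CartanEilenberg-book}), then to $\cotor^{n+2}_{\wbar{\C{C}}}({k_a}^\ast,{}_bk)$ by Theorem \ref{finite-cotor}, and then you run Theorem \ref{thm-spec-seq} on the coflat coextension $\wbar{\C{C}}\to\C{C}^+$ exactly as the paper does, with the collapse $E_1^{i,j}=0$ for $j\geq 1$ (the paper asserts this ``from the very definition of $\C{C}^+$''; your justification via the isolated group-likes $a,b$ and injectivity of the relevant $\C{C}^+$-comodules is a reasonable account of the same point). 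Up to the identification of the surviving row, your proposal and the paper's proof are the same argument.

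It is in the last step that you diverge, and there your proposal has a genuine gap --- one you flag yourself, but which is not a routine verification. You propose to match the collapsed row (the $\C{C}^+$-relative cobar complex) chain-by-chain with the normalized Hochschild complex of $\C{A}$, on the grounds that its degree-$n$ part is ``controlled by the strict chains $x_0<\cdots<x_n$ of $Q_0$''. That description is not true of the literal Hochschild complex $\Hom(\C{A}^{\ot n},\C{A})$; it holds only for the complex normalized relative to the separable subalgebra $k^{Q_0}$, and the comparison between the absolute and relative complexes is precisely the Gerstenhaber--Schack reduction \cite{GersScha83} that this paper's coalgebraic method is explicitly designed to bypass --- so as written your final step silently re-imports the result the authors set out to avoid, and the sign-and-differential matching you defer is exactly where the substance lies. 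Your sketch also does not account for the low-degree anomaly that forces the restriction $n\geq 1$: in cobar degree $2$ the chain $a<b$ with no interior vertex contributes a one-dimensional piece with no Hochschild counterpart. The paper resolves both issues by a different device: it absorbs the outer cotensor slots into the comodules ${}_aV={k_a}^\ast\Box_{\C{C}^+}\wbar{\C{C}}$ and $V_b=\wbar{\C{C}}\Box_{\C{C}^+}{}_bk$, applies Theorem \ref{finite-cotor} once more to recognize $E_2^{n+2,0}$ as $HH^n(\C{A},{}_aV\ot V_b)$ with no chain-level matching at all, and then compares coefficients through the explicit short exact sequence of bimodules
\begin{equation*}
0\lra {}_ak_b \lra {}_aV\ot V_b \lra \C{A} \lra 0,
\end{equation*}
concluding via the long exact sequence and the vanishing $HH^m(\C{A},{}_ak_b)=0$ for $m\geq 1$; the one-dimensional kernel ${}_ak_b$ is exactly the $a<b$ chain your identification would have to excise. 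To repair your route you would need to prove the relative-complex comparison and the differential matching you postpone --- in effect reproving \cite{GersScha83} and \cite[Prop. 2.1]{Cibi89} --- whereas the paper's bimodule argument sidesteps that work entirely.
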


\begin{proof}
We conclude from \cite[Coroll. IX.4.4]{CartanEilenberg-book} and \cite[Thm. 3.4]{AbraWeib02} that
\begin{align*}
& {\rm Ext}_{\wbar{\C{A}}}^{n+2}(k_a,{}_bk) \cong HH^{n+2}({\wbar{\C{A}}},\Hom_k(k_a,{}_bk)) = \\
&\hspace{2cm} HH^{n+2}({\wbar{\C{A}}},{k_a}^\ast\ot{}_bk)) \cong \cotor^{n+2}_{\wbar{\C{C}}}({k_a}^\ast,{}_bk).
\end{align*}
We next employ Theorem \ref{thm-spec-seq} to obtain a spectral sequence
\begin{equation*}
E_1^{i,j}=\cotor^j_{{\C{C}^+}}({k_a}^\ast,\wbar{\C{C}}^{\Box^i_{{\C{C}^+}}}\,\Box_{{\C{C}^+}}\,{}_bk) \Rightarrow \cotor^{i+j}_{\wbar{\C{C}}}({k_a}^\ast,{}_bk).
\end{equation*}
It follows immediately from the very definition of ${\C{C}^+}$ that $E_1^{i,j}=0$ for $j\geq 1$, and hence we arrive at
\begin{equation*}
{\rm Ext}_{\wbar{\C{A}}}^{n+2}(k_a,{}_bk) \cong \cotor^{n+2}_{\wbar{\C{C}}}({k_a}^\ast,{}_bk) \cong E_2^{n+2,0} 
\end{equation*}
which is the homology
\begin{equation*}
E_2^{n+2,0} := H^{n+2}(\CB_\Box^\ast({k_a}^\ast,{\C{C}^+},{}_b k),d)
\end{equation*}
of the subcomplex 
\begin{equation*}
\CB_\Box^\ast(k_a^\ast,{\C{C}^+},{}_b k) := \bigoplus_{n\geq 0} \CB_\Box^\ast(k_a^\ast,{\C{C}^+},{}_b k), \qquad \CB_\Box^n({k_a}^\ast,{\C{C}^+},{}_b k) := {k_a}^\ast\,\Box_{{\C{C}^+}} \, \wbar{\C{C}}^{\Box^n_{{\C{C}^+}}} \, \Box_{{\C{C}^+}}\,{}_bk
\end{equation*}
of the complex $\CB^\ast({k_a}^\ast,\wbar{\C{C}},{}_bk)$.  We further note that
\begin{equation*}
E_2^{n+2,0} := H^{n+2}(\CB_\Box^\ast({k_a}^\ast,{\C{C}^+},{}_b k),d) = \cotor^n_{\C{C}}({}_a V,V_b),
\end{equation*}
where ${}_a V = {k_a}^\ast\, \Box_{{\C{C}^+}}\,\wbar{\C{C}}$ with the right ${\C{C}^+}$-coaction given by $(\Id\ot\Id\ot \pi)(\Id\ot \wbar{\D})$, and similarly $V_b = \wbar{\C{C}}\, \Box_{{\C{C}^+}}\, {}_b k$ with the left ${\C{C}^+}$-coaction given by $(\pi \ot \Id\ot\Id)(\wbar{\D} \ot \Id)$. Using \cite[Thm. 3.4]{AbraWeib02} once again, we obtain
\begin{equation*}
{\rm Ext}_{\wbar{\C{A}}}^{n+2}(k_a,{}_bk) \cong HH^n(\C{A},{}_a V\ot V_b).
\end{equation*}
Finally, we consider the short exact sequence
\begin{equation*}
\xymatrix{
0\ar[r] & {}_a k_b \ar[r] & {}_a V\ot V_b \ar[r] & \C{A} \ar[r] & 0
}
\end{equation*}
of $\C{A}$-bimodules given by
\begin{equation*}
\eta:{}_a V\ot V_b \lra \C{A}, \qquad \wbar{p}\ot \wbar{q} \mapsto \wbar{s},
\end{equation*}
where $\wbar{s} \in\wbar{\C{A}}$ is the unique path joining $t(p)\in \wbar{Q}$ to $s(q)\in \wbar{Q}$, if it exists, and $\eta(\wbar{p}\ot \wbar{q}) = 0$ if the exists no such path in $Q$. Hence the kernel is determined by the unique path from $a\in \wbar{Q}$ to $b\in \wbar{Q}$. We note also that for any $\wbar{t} \in \C{A}$, dualizing the right $\C{C}$-coaction of ${}_a V$, 
\begin{equation*}
\wbar{t} \cdot \wbar{p} = \begin{cases}
\wbar{p_1} & \text{\rm if }\,\wbar{p} = \wbar{p_1}\wbar{t} \\
0 & \text{\rm otherwise.}
\end{cases}
\end{equation*}
Therefore,
\begin{equation*}
\eta(\wbar{t} \cdot \wbar{p}\ot \wbar{q}) =\wbar{t}\eta(\wbar{p}\ot \wbar{q}),
\end{equation*}
and similarly
\begin{equation*}
\eta(\wbar{p}\ot \wbar{q}\cdot \wbar{t}) =\eta(\wbar{p}\ot \wbar{q})\wbar{t}.
\end{equation*}
As a result, we have a long exact sequence 
\begin{equation*}
\xymatrix{
\ldots \ar[r] & HH^m(\C{A},{}_a k_b) \ar[r] & HH^m(\C{A},{}_a V\ot V_b) \ar[r] & HH^m(\C{A},\C{A}) \ar[r] & HH^{m+1}(\C{A},{}_a k_b)\ar[r] & \ldots
}
\end{equation*}
of Hochschild groups. Finally, since 
\begin{equation*}
HH^m(\C{A},{}_a k_b)=0,\qquad m\geq 1,
\end{equation*}
we obtain
\begin{equation*}
HH^m(\C{A},{}_a V\ot V_b) \cong HH^m(\C{A},\C{A}), \qquad m\geq 1
\end{equation*}
finishing the proof.
\end{proof}

\subsection*{Acknowledgement} The project is supported by D\"uzce University BAP (Bilimsel Ara\c{s}t{\i}rma Projesi) grant 2014.05.04.236, ``Artin Cebirlerinin Homolojik Boyutlar{\i}".

\bibliographystyle{plain}
\bibliography{references}{}

\end{document}